\newcommand{\R}{{\mathbb R}}
\newcommand{\N}{{\mathbb N}}
\newtheorem{teo}{Theorem}[section]
\newtheorem{lemma}{Lemma}[section]
\newtheorem{cor}{Corollary}[section]
\newtheorem{oss}{Remark}[section]
\title[Maximum principle for higher order operators]{Maximum principle for higher order operators in general domains}
\dedicatory{In loving memory of Louis Nirenberg}
\author[D.~Cassani]{Daniele Cassani$^\text{1}$}
\author[A.~Tarsia]{Antonio Tarsia}
\address[D.~Cassani]{\newline\indent Dip. di Scienza e Alta Tecnologia
\newline\indent
Universit\`{a} degli Studi dell'Insubria
\newline\indent and
\newline\indent RISM--Riemann International School of Mathematics
\newline\indent Villa Toeplitz, Via G.B. Vico, 46 -- 21100 Varese}
\email{\href{mailto:Daniele.Cassani@uninsubria.it}{Daniele.Cassani@uninsubria.it}}
\address[A.~Tarsia]{\newline\indent Dip. di Matematica
\newline\indent
Universit\`{a} di Pisa
\newline\indent Largo B. Pontecorvo, 5  -- 56127 Pisa}
\email{\href{mailto:antonio.tarsia@unipi.it}{antonio.tarsia@unipi.it}}
\thanks{(1) Corresponding author: \texttt{daniele.cassani@uninsubria.it}}
\subjclass[2010]{35J30, 35J48, 35B50}
\keywords{Higher order PDE, Polyharmonic operators, Positivity preserving property, Harnack's inequality.}
\date{\today}
\begin{document}

\begin{abstract} 
We first prove De Giorgi type level estimates for functions in $W^{1,t}(\Omega)$, $\Omega\subset\R^N$, with $t>N\geq 2$. This augmented integrability enables us  to establish a new Harnack type inequality for functions which do not necessarily belong to De Giorgi's classes as obtained in Di Benedetto--Trudinger \cite{DiBenedettoTrudinger} for functions in $W^{1,2}$. As a consequence, we prove the validity of the strong maximum principle for uniformly elliptic operators of any even order, in fairly general domains in dimension two and three, provided second order derivatives are taken into account.
\end{abstract}

\maketitle

\section{Introduction}

\noindent One of the most powerful tools in the study of partial differential equations and nonlinear analysis is without any doubts the Maximum Principle (MP in the sequel). It turns out to be fundamental in obtaining existence, uniqueness and regularity results in the theory of linear elliptic equations, as well as to establish qualitative properties of solutions to nonlinear equations. We mainly refer to \cite{Protter_Weinberger} for classical results and historical development, where suitable applications also to the parabolic and hyperbolic cases are discussed. Let us merely mention that the roots of MP date back two centuries in the work of Gauss on harmonic functions, up to the ultimate version of Hopf \cite{Hopf}, and then further extended in the seminal work of Nirenberg \cite{Nirenberg}, Alexandrov \cite{Alexandrov} and Serrin \cite{Serrin}, within the foundations of modern theory of PDEs. 

\noindent The underlying idea is simple: positivity of a suitable set of derivatives of a function induces positivity of the function itself. This is elementary true for real functions of one variable which vanish at the endpoints of an interval where $-u''(x)\geq 0$ and the validity can be extended to second order uniformly elliptic operators for which a prototype is the Laplace operator:
\begin{equation}\label{laplace}
\begin{cases}
-\Delta u=f,\quad &\text{ in } \Omega\subset \mathbb{R}^N, \quad N\geq 2\\
u=0, &\text{ on } \partial\Omega 
\end{cases}
\end{equation}  
for which we have 
$$f\geq 0\Longrightarrow  u\geq 0\text{ in } \Omega\ .$$
\noindent Surprisingly, this is no longer true when considering higher order elliptic operators such as the biharmonic operator $\Delta^2$:
\begin{equation}\label{biharmonic}
\begin{cases}
\Delta^2 u=f,\quad &\text{ in } \Omega\\
u=\frac{\partial u}{\partial \nu}=0, &\text{ on } \partial\Omega\ . 
\end{cases}
\end{equation}  
Indeed, in this case in general one has 
$$
f\geq 0\not\Longrightarrow u\geq 0\text{ in } \Omega\ .
$$ 
This is a well known fact as long as the domain $\Omega$ is not a ball, for which the positive Green function was computed by Boggio \cite{boggio} and which keeps on being positive for slight deformations of the ball \cite{Sassone}. As deeply investigated in \cite{GazzolaGrunauSweers10} and references therein, the lack of the positivity preserving property is due to the appearance of sets carrying small Hausdorff measure (see \cite{Grunau_Robert}) where $u<0$ and apparently without robust physical motivations. Recently the loss of the MP has been established in \cite{AJS} also in the case of higher order fractional Laplacians. This paper is a step forward a better understanding of this phenomena and at the same time gives some general principle in order to recover the validity of the MP in the higher order setting. 

\noindent Let us briefly recall some physical interpretation of  \eqref{laplace}--\eqref{biharmonic}. Indeed, \eqref{laplace} is modeling, among many other things, a membrane whose profile is $u$ which deflects under the charge load $f$ and clamped along the boundary $\partial\Omega$. This is the case in which tension forces prevale on bending forces which can be neglected because of the ``thin'' membrane. However, the model does not suite the case of a ``thick'' plate in which bending forces have to be taken into account. Here higher order derivatives come into play which yield \eqref{biharmonic}. As one expects for \eqref{laplace}, and there this is true by the MP, upwards pushing of a plate, clamped along the boundary, should yield upwards bending: this is false for \eqref{biharmonic} in contrast to some heuristic evidences in applications (see e.g.~\cite{Kempe} and references therein). 

\noindent Our point of view here, roughly speaking, is that approaching the boundary, where the bending energy carries some minor effect because of the clamping condition, tensional forces can not be neglected for which the contribute of lower order derivatives may restore the validity of the MP. As a reference example, consider the following simple model: 
\begin{equation}\label{cassani-tarsia}
\begin{cases}
\Delta^2 u-\gamma \Delta u=f,\quad &\text{ in } \Omega\subset \mathbb{R}^N, \, \gamma\geq 0\\
u=\frac{\partial u}{\partial \nu}=0, &\text{ on } \partial\Omega \ .
\end{cases}
\end{equation}
Clearly for $\gamma=0$ one has \eqref{biharmonic} whence formally as $\gamma\to\infty$, in a sense one may expect that \eqref{cassani-tarsia} inherits some properties of \eqref{laplace}. 

\noindent As we are going to see, this is the case and for the more accurate model \eqref{cassani-tarsia} surprisingly the MP holds true, for fairly general domains, provided $\gamma\geq \gamma_0>0$, which is essentially given in terms of Sobolev and Poincar\'e best constants. Let us state our main result in the case of \eqref{cassani-tarsia} though it extends to cover the general case of uniformly elliptic operators of any even order, see Corollary \ref{higher_order_cor}.

\begin{teo}\label{main}
Let $\Omega \subset \R^N, N=2,3$, be an open and bounded set, with sufficiently smooth boundary and which satisfies the interior sphere condition. Let $u\in H^2_0(\Omega)$ be a weak solution to \eqref{cassani-tarsia}, where $f\in L^2(\Omega)$, $f\geq 0$ in $\Omega$ and $|\{x: f(x)>0 \}|>0$. Then, there exists $\gamma_0>0$ (which depends on $f$, Sobolev and Poincar\'e best constants), such that for $\gamma >\gamma_0$ one has $u>0$ in $\Omega$.
\end{teo}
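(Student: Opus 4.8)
The plan is to exploit the decomposition of the fourth order problem \eqref{cassani-tarsia} into a second order system and then push the new Harnack inequality (available for $W^{1,t}$ functions with $t>N$) to get a pointwise lower bound near the boundary. First I would set $v := -\Delta u + \gamma u$, so that \eqref{cassani-tarsia} becomes the elliptic system $-\Delta v = f$, $-\Delta u + \gamma u = v$, with $u,v\in H^1_0(\Omega)$ (the boundary conditions $u=\partial u/\partial\nu=0$ plus smoothness of $\partial\Omega$ and the interior sphere condition give us enough regularity; in fact $v\in H^1_0(\Omega)$ follows from $f\in L^2$ and elliptic estimates, and $v\geq 0$ by the classical strong maximum principle for $-\Delta$ since $f\geq 0$, $f\not\equiv 0$). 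Then, since $N=2,3$, Sobolev embedding gives $v\in L^q$ for every $q<\infty$ when $N=2$ and for $q\le 6$ when $N=3$, hence $v\in L^p$ with $p>N/2$; by $L^p$-regularity theory $u\in W^{2,p}(\Omega)$ with $p>N/2$, so in particular $u\in W^{1,t}(\Omega)$ with $t=Np/(N-p)>N$ (choosing $p$ close enough to, but below, $N$, or any $p>N$ when $N=2$). This augmented integrability is precisely the hypothesis needed to invoke the De Giorgi level estimates and the resulting Harnack type inequality established earlier in the paper.

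Next I would run a barrier/comparison argument to locate the sign. The solution of $-\Delta u + \gamma u = v$ with $u=0$ on $\partial\Omega$ and $v\ge 0$ satisfies $u\ge 0$ in $\Omega$ by the weak maximum principle for the coercive operator $-\Delta + \gamma$ (no regularity of $\partial\Omega$ needed for this step, just $v\ge 0$). The content is strict positivity. Here is where $\gamma$ large enters: write $u = \gamma^{-1} v + w$ where $w$ solves $-\Delta w + \gamma w = \gamma^{-1}\Delta v$ (interpreting $\Delta v = -f$ distributionally), i.e. $-\Delta w + \gamma w = -\gamma^{-1} f \le 0$, so $w\le 0$; and from energy estimates $\|w\|$ in a suitable norm is $O(\gamma^{-2})$ times $\|f\|_{L^2}$ (test the equation for $w$ with $w$: $\|\nabla w\|_2^2 + \gamma\|w\|_2^2 = \gamma^{-1}\int f\,w \le \gamma^{-1}\|f\|_2\|w\|_2$, hence $\|w\|_2 \le \gamma^{-2}\|f\|_2$ and $\|\nabla w\|_2^2 \le \gamma^{-3}\|f\|_2^2$). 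Upgrading this through the Sobolev/De Giorgi machinery yields $\|w\|_{C^0} \le C(\Omega,N)\,\gamma^{-\theta}\|f\|_2$ for some $\theta>0$. On the other hand $\gamma^{-1} v$ is, up to the factor $\gamma^{-1}$, a fixed positive function: by the Harnack inequality applied to $v$ (solving $-\Delta v=f\ge 0$, $v\in W^{1,t}$, $t>N$) together with the interior sphere condition and Hopf's lemma, one gets $v(x) \ge c_0\, d(x,\partial\Omega)\,\|f\|_{L^1}$-type lower bounds, or at least $v \ge c_0 \varphi_1$ for the first eigenfunction $\varphi_1$, with $c_0>0$ independent of $\gamma$. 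Therefore $u = \gamma^{-1}v + w \ge \gamma^{-1} c_0 \varphi_1 - C\gamma^{-\theta}\|f\|_2$. Choosing $\gamma_0$ so that for $\gamma>\gamma_0$ the first term dominates the second pointwise — which, because both the main term and the error are governed by $\mathrm{dist}(x,\partial\Omega)$ near $\partial\Omega$ (via Hopf on $v$ and via the De Giorgi/Harnack boundary estimate on $w$), forces a comparison of the \emph{rates} of vanishing, not merely of sup norms — gives $u>0$ throughout $\Omega$.

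The main obstacle I anticipate is the last matching step near the boundary: both $v$ and $w$ vanish on $\partial\Omega$, so a crude bound $\|w\|_\infty \ll \gamma^{-1}\min_{K}v$ only controls $u$ on compact interior sets $K\Subset\Omega$, and one still has to exclude a thin negative layer hugging $\partial\Omega$ — exactly the pathology that destroys positivity for the pure biharmonic problem. This is resolved by the interior sphere condition: it lets us place an interior ball at each boundary point and apply Hopf's lemma to $v$ to get $v \ge c_0\,\mathrm{dist}(\cdot,\partial\Omega)$ near $\partial\Omega$, while the De Giorgi level estimates for $W^{1,t}$ functions (the new ingredient, replacing the $W^{1,2}$ De Giorgi classes) give a matching one-sided bound $|w| \le C\gamma^{-\theta}\,\mathrm{dist}(\cdot,\partial\Omega)$, so that the positive linear-in-distance term beats the error uniformly up to the boundary once $\gamma$ is large. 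The dependence of $\gamma_0$ on $f$ and on Sobolev/Poincaré constants then comes exactly from tracking $c_0$ (Harnack/Hopf constant for $v$, which involves Poincaré and the geometry) against $C$ (the constant in the De Giorgi–Sobolev estimate for $w$, which involves the Sobolev embedding constant), and the extension to general even order operators (Corollary \ref{higher_order_cor}) follows by iterating the same system decomposition.
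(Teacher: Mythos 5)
Your proposal breaks down at the very first step: you assert that with $v:=-\Delta u+\gamma u$ one has $v\in H^1_0(\Omega)$, so that \eqref{cassani-tarsia} decouples into the two second-order Dirichlet problems $-\Delta v=f$, $v|_{\partial\Omega}=0$, and $-\Delta u+\gamma u=v$, $u|_{\partial\Omega}=0$. But the clamped boundary conditions $u=\partial u/\partial\nu=0$ give no control whatsoever on $v|_{\partial\Omega}=-\Delta u|_{\partial\Omega}$ (the term $\gamma u$ vanishes on $\partial\Omega$ in any case): the trace of $\Delta u$ on $\partial\Omega$ is a genuinely free quantity for the clamped problem, and elliptic regularity from $f\in L^2$ does nothing to force it to vanish. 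Without $v\geq 0$ on $\partial\Omega$ you cannot invoke the maximum principle for $-\Delta v=f\geq 0$ to conclude $v\geq 0$ in $\Omega$, and in fact $v=-\Delta u$ changing sign is precisely what goes wrong in the pure biharmonic clamped problem. This is the central obstruction the paper is designed to overcome --- the introduction explicitly notes that under Dirichlet conditions ``the higher order operator in this case does not decouple into powers of a second order operator.'' Under Navier conditions $u=\Delta u=0$ your splitting would be legitimate, the maximum principle would apply twice, and positivity would be trivial for every $\gamma\geq 0$; the theorem is nontrivial exactly because those are not the boundary conditions at hand.

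Everything you build on that splitting --- the sign of $v$, the definition and sign of $w$, Hopf's lemma for $v$, the boundary rate comparison --- therefore has no foundation. The paper takes a genuinely different route with no system decoupling. It applies the new Harnack inequality (Theorems \ref{cor_c} and \ref{teoh}) \emph{directly to the solution $u$}, feeding in two facts: (i) Lemma \ref{gradient_est} gives $\|\nabla u\|_{L^t(\Omega)}$, $t>N$, bounded by a quantity \emph{independent of} $\gamma$ (via the $L^2$ bound on $D^2u$ coming from $\int_\Omega|\Delta u|^2\,dx\leq\int_\Omega fu\,dx$), and (ii) the energy identity $\int_\Omega|\Delta u|^2\,dx+\gamma\int_\Omega|\nabla u|^2\,dx=\int_\Omega fu\,dx$ forces $\|\nabla u\|_{L^2(\Omega)}^2\leq\gamma^{-1}\int_\Omega fu\,dx$, which injects the decisive negative power of $\gamma$ into the Harnack bound. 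The argument first handles $f$ supported away from $\partial\Omega$ (Theorem \ref{teo1pos}), using a scaling device to make the threshold $\gamma_0$ independent of the Harnack-chain length $h$ (this is where the interior sphere condition enters, to keep $h$ stable under rescaling), and then removes the support restriction by decomposing $f=\sum_m g_m$ with each $g_m$ compactly supported and passing to the limit in the corresponding solutions. Your observation that $\nabla u\in L^t$ for some $t>N$ when $N=2,3$ is the right flavor --- it is essentially Lemma \ref{gradient_est} --- but it has to be applied to $u$ itself, not to the pieces of a decomposition that does not exist for these boundary conditions.
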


\noindent As a consequence of Theorem \ref{main}, the operator $\Delta^2-\gamma\Delta$, which in addition to \eqref{biharmonic} contains the contribute of lower order derivatives, turns out to be a more natural extension of \eqref{laplace} to the higher order setting.

\medskip 

\noindent \textbf{Overview}. In Section \ref{preliminaries} we prove some preliminary estimates which will be the key ingredient to prove in Section \ref{harnack} a new Harnack type inequality. Indeed, in the higher order case, it is well known how truncation methods fail \cite{GazzolaGrunauSweers10}. Our approach here is to demand some extra integrability on the function entering the Harnack inequality in place of being solution to a PDE, which usually yields Caccioppoli's inequality and the solution belongs to the corresponding De Giorgi class. In \cite{DiBenedettoTrudinger} the authors prove a Harnack type inequality just for functions with membership in some De Giorgi classes. Here we drop this assumption though we assume more regularity in terms of integrability which however enables us to prove De Giorgi type pointwise level estimates. In Section \ref{higher_order} we apply the results obtained to prove the strong maximum for polyharmonic operators of any order, which contain lower order derivatives, in sufficiently smooth bounded domains which enjoy the interior sphere condition. This is done by a limiting procedure starting from compactly supported functions and then extending the results and estimates to the solutions of higher order PDEs subject to Dirichlet boundary conditions. Those boundary conditions are in a sense the natural ones as the higher order operator in this case does not decouple into powers of a second order operator. In one hand the result we obtain is a first step towards the investigation of qualitative properties of higher order nonlinear PDEs, such as uniqueness, optimal regularity, symmetries and concentration phenomena \cite{berchio, CassaniSchiera19, GidasNiNirenberg79, Martinazzi, MM, PV, Pucci_Serrin, Sweers}. On the other hand, we are confident the tools introduced here may reveal useful also in different contexts, such as parabolic problems, in the study of the sign of solutions to quasilinear equations and in the higher order fractional Laplacian setting. 

\noindent This research started in 2010 when Theorem \ref{main} was settled by the first named author in the form of conjecture in a conference in Pisa. New advances towards the results in this paper have been made in 2014 during the first visit of Louis Nirenberg in Varese, then in New York 2015, Pisa 2016 and Varese again in 2017 (his last trip), occasions in which Louis has further stimulated this research during long discussions of which we keep nostalgic memories. Goodbye Louis!

\medskip

\noindent \textbf{Notation}. In the sequel we will use the following basic definitions:
\begin{itemize}
\item $B(x_0,r)$ denotes the ball in $\R^N$ of center $x_0$ and radius $r$;
\item $\omega_N$ is the volume of the unit ball in $\mathbb{R}^N$;
\item $d_{\Omega}$ denotes the diameter of the bounded set $\Omega$ in $\R^N$;
\item $|\,\cdot\,|$ applied to sets denotes the Lebesgue measure in $\R^N$ otherwise it is the Euclidean norm in $\R^N$ with scalar product $(\cdot \, , \, \cdot)$;
\item $A^+(x_0,k,r):=\{ x:\,x\in B(x_0,r),\,\,u(x)>k\}$;
\item  $(u-k)^+ := \max \{u-k,\,0\}$;
\item $\Omega$ satisfies the \textit{interior sphere condition} if for all $x \in \partial \Omega$ there exists $y \in \Omega $ and $r_0>0$ such that $B(y,r_0)\subset\Omega$ and $x \in \partial B(y,r_0)$;
\item $c$ and $C$ denote positive constants which may change from line to line and which do not depend on the other quantities involved unless explicitly emphasized;
\item $W^{m,p}(\Omega)$ is the standard Sobolev space endowed with the norm $\|\cdot\|_{m,p}^p=\sum_{0\leq |\alpha|\leq m}\|D^{\alpha}u\|_p^p$;
\item $W^{m,p}_0(\Omega)$ is the completion of smooth compactly supported functions with respect to the norm $\|\cdot\|_{m,p}$;
\item the critical Sobolev exponent $p^*:=\frac{Np}{N-mp}$, $1<p<N/m$.
\end{itemize}
\section{Preliminaries}\label{preliminaries}
\noindent Let $\Omega\subset\R^N$, $N\geq 2$, be an open bounded set with sufficiently smooth boundary. The following holds true 
\begin{lemma}\label{lemma0}
Let $u \in W^{1,t}(\Omega)$, $t>N$ and $1<s<N$. Then there exists $c(s,t)>0$ such that for all $k \in \R$, $x_0\in \Omega$ and $\rho \in (0,r)$ where $0<r \,<\,dist(x_0,\partial \Omega)$ the following holds
\begin{multline}\label{0.1}
\int_{A^+(x_0,k,\rho)}  (u-k)^{s^*}\,\,\,dx\leq \frac{c(s,t,N)}{(r \,-\,\rho)^{s^*}}\,\,|A^+(x_0,k,r)|^{(1-\frac{s}{t}) \frac{s^*}{s}}\\
\cdot \left[ \int_{A^+(x_0,k,r)} (u-k)^t\,\,dx\,+
\,r^t\,\int_{A^+(x_0,k,r)} |\nabla u|^t\,\,dx\right]^{\frac{s^*}{t}}\ .
\end{multline}

\end{lemma}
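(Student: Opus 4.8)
The plan is to derive \eqref{0.1} by combining a Sobolev–Gagliardo–Nirenberg inequality on balls with a Hölder step that exploits the extra integrability $t>N$. First I would fix $x_0$ and work on the ball $B=B(x_0,r)$, and introduce a cutoff function $\eta\in C_c^\infty(B(x_0,r))$ with $\eta\equiv 1$ on $B(x_0,\rho)$, $0\le\eta\le 1$, and $|\nabla\eta|\le c/(r-\rho)$. Set $v:=(u-k)^+$; then $v\in W^{1,t}(B)$ with $\nabla v=\nabla u\cdot\chi_{\{u>k\}}$ a.e., and $\eta v\in W^{1,s}_0(B)$ since $s<N<t$ and $B$ is bounded. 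Apply the Sobolev inequality $\|\eta v\|_{L^{s^*}(B)}\le c(s,N)\,\|\nabla(\eta v)\|_{L^s(B)}$, which bounds $\left(\int_{A^+(x_0,k,\rho)}v^{s^*}\right)^{1/s^*}$ by $c(s,N)\big(\|v\nabla\eta\|_{L^s}+\|\eta\nabla v\|_{L^s}\big)$, and then estimate $\|v\nabla\eta\|_{L^s}\le \frac{c}{r-\rho}\|v\|_{L^s(A^+(x_0,k,r))}$ and $\|\eta\nabla v\|_{L^s}\le \|\nabla v\|_{L^s(A^+(x_0,k,r))}$.

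Next I would upgrade the two $L^s$-norms on $A^+(x_0,k,r)$ to $L^t$-norms via Hölder's inequality with exponents $t/s$ and $t/(t-s)$: for any $w\in L^t(A^+)$,
\[
\int_{A^+(x_0,k,r)}|w|^s\,dx\le |A^+(x_0,k,r)|^{1-s/t}\left(\int_{A^+(x_0,k,r)}|w|^t\,dx\right)^{s/t}.
\]
Applying this with $w=v$ and with $w=|\nabla v|=|\nabla u|\chi_{\{u>k\}}$, raising the whole resulting estimate for $\|v\|_{L^{s^*}(A^+(x_0,k,\rho))}$ to the power $s^*$, and using the elementary inequality $(a+b)^{s^*/s}\le 2^{s^*/s}(a^{s^*/s}+b^{s^*/s})$ together with $(a^{s/t}+b^{s/t})^{t/s}\le 2^{t/s-1}(a+b)$ to collect the $(u-k)^t$ and $|\nabla u|^t$ integrals into a single bracket, yields exactly the right-hand side of \eqref{0.1}, with the factor $(r-\rho)^{-s^*}$ and the power $(1-s/t)\frac{s^*}{s}$ on $|A^+(x_0,k,r)|$. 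The scaling weight $r^t$ in front of $\int|\nabla u|^t$ comes from tracking homogeneity: since $r-\rho<r$ one has $\frac{1}{(r-\rho)^{s^*}}\le \frac{r^{s^*(t-1)/?}}{\dots}$; more cleanly, one keeps the gradient term as $\int|\nabla v|^s\le (r^{-s})\cdot r^s\int|\nabla v|^s$ and inserts $r^t$ after the Hölder step so that the bracket is dimensionally consistent, at the expense of absorbing a harmless power of $r/(r-\rho)\ge 1$ into the gradient contribution — this is where a little care with constants is needed but nothing deep.

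The only genuinely delicate point is making sure the constant $c(s,t,N)$ is independent of $k$, $x_0$, $r$ and $\rho$: the Sobolev constant on a ball is scale-invariant for the critical exponent $s^*$, so $\|w\|_{L^{s^*}(B(x_0,r))}\le c(s,N)\|\nabla w\|_{L^s(B(x_0,r))}$ holds with $c(s,N)$ independent of $r$ and $x_0$, and the remaining constants are purely numerical from the product/Hölder steps; the dependence on $t$ enters only through the exponent $1-s/t$ and the numerical factors $2^{t/s}$. I would also note that the condition $0<r<\mathrm{dist}(x_0,\partial\Omega)$ is used precisely so that $B(x_0,r)\subset\Omega$ and all integrals make sense, and that no boundary regularity of $\Omega$ is needed for this interior estimate. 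Everything else is routine bookkeeping of exponents, so I expect the write-up to be short.
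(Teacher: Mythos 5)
Your proposal follows the paper's approach essentially identically: a cutoff function localizes $(u-k)^+$ to $B(x_0,r)$, the Sobolev inequality for $W^{1,s}_0$ controls the $L^{s^*}$ norm by the $L^s$ norm of the gradient, and H\"older with exponents $t/s$, $t/(t-s)$ upgrades to $L^t$ norms; the only cosmetic difference is that you expand $\nabla(\eta v)$ by the product rule before applying H\"older, whereas the paper applies H\"older directly to $\bigl(\int |\nabla[(u-k)\Theta]|^s\bigr)^{s^*/s}$ and expands afterward. Both routes produce the stated estimate once one uses $(r-\rho)^t \le r^t$ to factor out $(r-\rho)^{-s^*}$ and insert the $r^t$ weight on the gradient integral --- a step your write-up phrases somewhat confusingly (the unfinished ``$r^{s^*(t-1)/?}$'') but which is ultimately just this elementary observation.
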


\begin{proof} Consider a standard cut-off function $\Theta \in C^{\infty}_0(\R^N)$ given by 
\begin{eqnarray}
\Theta(x)=\left\{
\begin{array}{l}
1, \,\,\,x \in B(x_0,\rho)\\
\\
0,\,\,\,x \notin B(x_0,r),
\end{array}
\right.
\end{eqnarray}
such that $0\leq \Theta (x) \leq 1$ and $\displaystyle  |\nabla \Theta (x) | \leq \frac{c}{r-\rho}$.

\noindent As $W^{1,t}(\Omega)\hookrightarrow W^{1,s}(\Omega)$, one has from Sobolev's emedding and H\"older's inequality
\begin{eqnarray*}
&&\int_{A^+(x_0,k,\rho)}  (u-k)^{s^*}\,\,\,dx\,\leq \displaystyle \int_{A^+(x_0,k,r)}  |(u-k)\Theta(x)|^{s^*}\,\,\,dx\,\\
&&\leq \,c(s)\,\left[\int_{A^+(x_0,k,r)} |\nabla [(u-k)\,\,\Theta]|^s\,\,dx\right]^{\frac{s^*}{s}}\\
&&\leq \,c(s)\,|A^+(x_0,k,r)|^{[1-\frac{s}{t}]\frac{s^*}{s}} \,\,\left[\int_{A^+(x_0,k,r)} |\nabla [(u-k)\,\,\Theta]|^t\,\,dx\right]^{\frac{s^*}{t}} \\ 
&&\leq \,c(s,t)\,|A^+(x_0,k,r)|^{[1-\frac{s}{t}]\frac{s^*}{s}} \\
&&\quad \cdot\left[ \displaystyle \frac{c}{(r-\rho)^t}\,\int_{A^+(x_0,k,r)}  (u-k)^t\,\,\,dx\,+\,\int_{A^+(x_0,k,r)} |\nabla u |^t\,\,\,dx\right]^{\frac{s^*}{t}}
\end{eqnarray*}
\end{proof}
\begin{oss}
The condition $0<r<dist(x_0, \partial \Omega)$, namely that $x_0$ lies in the interior of $\Omega$, is crucial to extend to the whole $\R^N$ 
the function $(u-k)\Theta$. Therefore when $x_0$ approaches $\partial \Omega$, necessarily $r=r(x_0)$ tends to zero.
\end{oss}
\begin{lemma}\label{lemma1}
Let $u \in W^{1,t}(\Omega)$, $t>N\geq 2$ and $1<s<N$. Let $l$, $k\in \R$ such that $l>k$, $x_0\in \Omega$ and $r<dist(x_0,\partial\Omega)$. Then for all $\rho \in(0,r)$ one has 
\begin{eqnarray}\label{1.1}
&&\int_{A^+(x_0,l,\rho)}  (u-l)^2\,dx \leq \frac{c(t)\,\,|A^+(x_0,k,r)|^{\beta}}{(r-\rho)^{\frac{2(p-1)}{p}}}  \left[
\int_{A^+(x_0,k,r)}  (u-k)^{2}\,dx\,\right]^{\frac{1}{p}} 
\\
\nonumber &&\quad  \cdot \,\left[  \int_{A^+(x_0,k,r)} (u-k)^t\,\,dx\, +\,r^t\,\int_{A^+(x_0,k,r)} |\nabla u|^t\,\,dx \right]^{\frac{2(p-1)}{pt}}\ ,
\end{eqnarray}
where $\displaystyle \beta = 1- \frac{2}{q}+\left( 1-\frac{s}{t}\right) \frac{s^*}{s} \,\frac{2p-q}{pq}$,  
$s=\displaystyle\frac{2qN(p-1)}{N(2p-q)+2q(p-1)}$ and $2<q<2p$, $p>1$.
\end{lemma}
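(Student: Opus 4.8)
The plan is to derive \eqref{1.1} from Lemma \ref{lemma0} by an interpolation of $L^2$ between $L^{s^*}$ and $L^t$, choosing the parameter $s$ precisely so that the exponents match up. First I would fix $l>k$ and observe that on $A^+(x_0,l,\rho)$ one has $(u-l)^+\le (u-k)^+$ and $A^+(x_0,l,\rho)\subset A^+(x_0,k,\rho)$, so it suffices to bound $\int_{A^+(x_0,k,\rho)}(u-k)^2\,dx$. Writing $2 = \theta s^* + (1-\theta) t$ for the appropriate $\theta\in(0,1)$ — equivalently splitting the exponent via H\"older with a pair of conjugate exponents tied to $q$, since $2<q<2p$ is the free parameter governing the split — I would apply H\"older's inequality on $A^+(x_0,k,\rho)$ to get
\begin{equation*}
\int_{A^+(x_0,k,\rho)}(u-k)^2\,dx \le \left(\int_{A^+(x_0,k,\rho)}(u-k)^{s^*}\,dx\right)^{\!\alpha}\left(\int_{A^+(x_0,k,\rho)}(u-k)^t\,dx\right)^{\!\delta}|A^+(x_0,k,\rho)|^{1-\alpha-\delta}
\end{equation*}
for suitable $\alpha,\delta>0$. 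The point of the somewhat baroque formula for $s$ is exactly to make the bookkeeping of these three exponents close: one wants the power of the $L^{s^*}$ factor, after inserting \eqref{0.1}, to combine with the $L^t$ factor to reproduce the bracketed quantity raised to $\frac{2(p-1)}{pt}$, the measure factors to collapse into $|A^+(x_0,k,r)|^\beta$, and the $(r-\rho)$ factor to appear with power $\frac{2(p-1)}{p}$.

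Concretely, after the H\"older split I would substitute the conclusion \eqref{0.1} of Lemma \ref{lemma0} for the $L^{s^*}$ term on $B(x_0,\rho)$; note that $A^+(x_0,k,\rho)\subset A^+(x_0,k,r)$ lets me enlarge all remaining domains of integration to $A^+(x_0,k,r)$ and replace $|A^+(x_0,k,\rho)|$ by $|A^+(x_0,k,r)|$ at the cost of nothing, since the measure exponents that survive are nonnegative (this should be checked, and is where the constraint $1<s<N$, i.e.\ that $s^*$ is a genuine Sobolev exponent, together with $2<q<2p$, is used). Collecting the $(r-\rho)$ powers — one factor of $(r-\rho)^{-s^*}$ from \eqref{0.1} raised to the power $\alpha$ — and matching against $\frac{2(p-1)}{p}$ pins down $\alpha$; matching the exponent of the bracket $[\int (u-k)^t + r^t\int|\nabla u|^t]$, which comes out as $\alpha\cdot\frac{s^*}{t}+\delta$, against $\frac{2(p-1)}{pt}$ pins down $\delta$; and matching the remaining exponent on $[\int(u-k)^2]$ against $\frac1p$ closes the system. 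One then simply reads off that $\beta$ equals the leftover power of $|A^+(x_0,k,r)|$, which is the displayed expression $1-\frac2q+(1-\frac st)\frac{s^*}{s}\frac{2p-q}{pq}$, and that $s$ must be the displayed rational function of $N,p,q$ for all three matchings to hold simultaneously.

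The main obstacle is purely the algebra of exponent-matching: one has a free continuous parameter $q\in(2,2p)$, the implicit relation $s^*=\frac{Ns}{N-s}$, and three linear constraints (on the powers of $(r-\rho)$, of the $L^2$-integral, and of the gradient bracket), and one must verify that the stated value of $s$ is exactly the solution and that the resulting $\alpha,\delta,1-\alpha-\delta$ are all $\ge 0$ so that every application of H\"older is legitimate and every enlargement of domain is monotone in the right direction. A secondary, minor point is that \eqref{0.1} is stated for $\rho\in(0,r)$ with $r<\mathrm{dist}(x_0,\partial\Omega)$, which is precisely the hypothesis of Lemma \ref{lemma1}, so no new geometric input is needed. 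Once the exponent arithmetic is done, the inequality \eqref{1.1} follows by simply threading \eqref{0.1} through the H\"older decomposition and relabeling constants, all of which depend only on $s,t,N$ — hence only on $t,N$ and the fixed choices of $p,q$ — consistent with the constant $c(t)$ in the statement.
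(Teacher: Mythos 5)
Your proposed three-way H\"older split
\[
\int_{A^+(k,\rho)}(u-k)^2\,dx \le \left(\int(u-k)^{s^*}\,dx\right)^{\alpha}\left(\int(u-k)^t\,dx\right)^{\delta}|A^+(k,\rho)|^{1-\alpha-\delta}
\]
cannot produce the factor $\bigl[\int_{A^+(k,r)}(u-k)^2\,dx\bigr]^{1/p}$ that appears on the right-hand side of \eqref{1.1}: there is simply no $L^2$ term in your decomposition. This is not a bookkeeping detail but the whole point of the lemma --- the self-referential $L^2$ factor with sub-unit exponent $1/p<1$ is what makes the iteration in Theorem~\ref{teorema1} supergeometric (it is the source of the exponent $\theta>1$ there). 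Moreover your constraint system is overdetermined: matching the $(r-\rho)$ power forces $\alpha s^*=\tfrac{2(p-1)}{p}$, and matching the bracket power $\alpha\tfrac{s^*}{t}+\delta=\tfrac{2(p-1)}{pt}$ then forces $\delta=0$; but for a genuine H\"older split of $(u-k)^2$ with $\delta=0$ one needs $\alpha s^*=2$, contradicting $\alpha s^*=\tfrac{2(p-1)}{p}<2$. So the displayed matching has no solution, and the inequality you would actually obtain, namely $\int(u-k)^2\le c\,(r-\rho)^{-2}|A^+|^{\cdots}[\cdots]^{s^*/t\cdot 2/s^*}$, is a different and weaker estimate.

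The correct interpolation, which is what the paper carries out (in two stages, with some typographical slips in the displayed exponents of \eqref{1.4}--\eqref{1.5}), is between $L^2$ itself, $L^{s^*}$ and the measure: first H\"older $\int(u-k)^2\le |A^+(k,\rho)|^{1-2/q}\bigl[\int(u-k)^q\bigr]^{2/q}$, then a second H\"older with conjugate exponents $\tfrac{2p}{q}$ and $\tfrac{2p}{2p-q}$ giving $\int(u-k)^q\le\bigl[\int(u-k)^2\bigr]^{q/(2p)}\bigl[\int(u-k)^{s^*}\bigr]^{(2p-q)/(2p)}$ with $s^*=\tfrac{2q(p-1)}{2p-q}$. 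Chaining and then inserting \eqref{0.1} for the $L^{s^*}$ term produces exactly the stated powers: the $L^t$ and $|\nabla u|^t$ contributions in the bracket come entirely from Lemma~\ref{lemma0}, not from an additional H\"older factor. You should redo the exponent matching with $L^2$, rather than $L^t$, as the second interpolation endpoint; once you do, the identification $s^*=\tfrac{Ns}{N-s}=\tfrac{2q(p-1)}{2p-q}$ gives the stated formula for $s$, and $\beta$ falls out as the total measure exponent.
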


\begin{proof}

Let $x_0\in \Omega$, $r<dist(x_0,\partial\Omega)$
and for simplicity let us write $A^+(k,r)$ in place of $A^+(x_0,k,r)$.

\noindent For $l,k \in \R$  and $\rho \in (0,r)$, since $A^+(l,\rho)\subset A^+(k,\rho)$ we have 
 \begin{eqnarray}
\int_{A^+(l,\rho)}  (u-l)^2\,\,\,dx \leq \int_{A^+(k,\rho)}  (u-k)^2\,\,\,dx. 
\label{1.2}
\end{eqnarray}
Let $q>2$ for which one has  
\begin{eqnarray}
\int_{A^+(k,\rho)}  (u-k)^2\,\,\,dx\,\leq \,|A^+(k,\rho)|^{1-\frac{2}{q}}\left[  \int_{A^+(k,\rho)}  (u-k)^q\,\,\,dx \right]^{\frac{2}{q}} \ .
\label{1.3}
\end{eqnarray}
Let now $p>1$ and $2<q<2p$ and estimate by H\"older's inequality
\begin{multline}\label{1.4}
\int_{A^+(k,\rho)}  (u-k)^q\,\,\,dx \,\,\ \\
\leq \,\, \left[  \int_{A^+(k,\rho)}  (u-k)^2\,\,\,dx \right]^{\frac{1}{p}} 
 \left[  \int_{A^+(k,\rho)}  (u-k)^{\frac{2q(p-1)}{(2p-q)}}\,\,\,dx \right]^{\frac{2p-q}{2p}}\\
\\
\leq \frac{c(p,q,t)}{(r-\rho)^{\frac{q(p-1)}{p}}}\,\,|A^+(k,r)|^{(1-\frac{s}{t})\frac{q(p-1)}{sp}} \left[  \int_{A^+(k,\rho)}  (u-k)^2\,\,\,dx \right]^{\frac{1}{p}} \,\,
 \\
\quad  \cdot
\left[  \int_{A^+(k,r)} (u-k)^t\,\,dx\, +\,r^t\,\int_{A^+(k,r)} |\nabla u|^t\,\,dx \right]^{\frac{2(p-1)}{tp}} \ ,
\end{multline}
where in the last inequality we have used Lemma \ref{lemma0} with $s^*=\frac{2q(p-1)}{2p-q}$. Combine (\ref{1.3}) and (\ref{1.4}) to get 
\begin{multline}\label{1.5}
\int_{A^+(k,\rho)}  (u-k)^2\,\,\,dx \,\,\ \\
\leq\,\,\frac{c(p,q,t,N)}{(r-\rho)^{2\frac{p-1}{p}}}\,\,|A^+(k,r)|^{(1-\frac{2}{q})\frac{q}{2p}+(1-\frac{s}{t})\frac{2(p-1)}{sp}} \left[  \int_{A^+(k,\rho)}  (u-k)^2\,\,\,dx \right]^{\frac{1}{p}} \,\,\\
 \quad \cdot
\left[  \int_{A^+(k,r)} (u-k)^t\,\,dx\, +\,r^t\,\int_{A^+(k,r)} |\nabla u|^t\,\,dx \right]^{\frac{2(p-1)}{tp}} \ .
\end{multline}
\end{proof}

\noindent In what follows we will use the following result from \cite{ACM} in order to prove a version of the well known Poincar\'e inequality. 
\begin{teo}[Theorem A. 28, p. 184 in \cite{ACM}]
Let $u \in W^{1,1}(B_r)$, auch that $u \geq 0$ and $|\{x:\,u(x)=0 \}|\geq \frac{|B_r|}{2}$. Then
\begin{eqnarray}
\left(\int_{B_r} u^{1^*}\,\,\,dx \right)^{\frac{1}{1^*}} \leq \,c\, \int_{B_r}|\nabla u|\,\,\,dx,
\label{MaggPoincare1}
\end{eqnarray}
where $c=c(N)$ depends only on the dimension  $N$.
\label{TeoPoinacare1}
\end{teo}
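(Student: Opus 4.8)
The plan is to reduce to the unit ball by scaling and then run a Cavalieri--coarea argument anchored on a relative isoperimetric inequality. Both sides of \eqref{MaggPoincare1} are homogeneous of the same degree under the dilation $u_r(x):=u(rx)$ on $B_1$ --- one computes $\bigl(\int_{B_1}u_r^{1^*}\,dx\bigr)^{1/1^*}=r^{-(N-1)}\bigl(\int_{B_r}u^{1^*}\,dy\bigr)^{1/1^*}$ and $\int_{B_1}|\nabla u_r|\,dx=r^{-(N-1)}\int_{B_r}|\nabla u|\,dy$, using $N/1^*=N-1$ --- while the hypothesis $|\{u=0\}|\ge |B_r|/2$ is dilation invariant. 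Hence it suffices to treat $r=1$.

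First I would use the layer--cake decomposition $u(x)=\int_0^\infty \mathbf 1_{\{u>t\}}(x)\,dt$ and Minkowski's integral inequality in $L^{1^*}(B_1)$ (legitimate because $1^*=\nicefrac{N}{N-1}\ge 1$) to obtain
\[ \Bigl(\int_{B_1}u^{1^*}\,dx\Bigr)^{1/1^*}\le \int_0^\infty |\{x\in B_1:\,u(x)>t\}|^{1/1^*}\,dt . \]
Since $u\ge 0$, the set $\{u=0\}$ equals $\{u\le 0\}$, so the hypothesis forces $|\{u>t\}|\le |\{u>0\}|\le |B_1|/2$ for every $t>0$; thus each superlevel set $E_t:=\{u>t\}$ is the ``small half'' of $B_1$, and the relative isoperimetric inequality in the ball yields a dimensional constant $c(N)$ with $|E_t|^{1/1^*}=\min\{|E_t|,|B_1\setminus E_t|\}^{1/1^*}\le c(N)\,P(E_t;B_1)$, where $P(\,\cdot\,;B_1)$ denotes perimeter relative to $B_1$. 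Substituting this and applying the coarea formula $\int_0^\infty P(E_t;B_1)\,dt=\int_{B_1}|\nabla u|\,dx$ (valid for $u\in W^{1,1}(B_1)$) gives $\bigl(\int_{B_1}u^{1^*}\,dx\bigr)^{1/1^*}\le c(N)\int_{B_1}|\nabla u|\,dx$, and undoing the scaling completes the proof.

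An equivalent route avoiding BV language: invoke the $W^{1,1}$ Sobolev--Poincar\'e inequality $\|u-\bar u\|_{L^{1^*}(B_1)}\le c(N)\|\nabla u\|_{L^1(B_1)}$ on the ball, with $\bar u$ the mean of $u$; evaluating $u-\bar u=-\bar u$ on $Z:=\{u=0\}$ and using $|Z|\ge |B_1|/2$ gives $|\bar u|\,(|B_1|/2)^{1/1^*}\le \|u-\bar u\|_{L^{1^*}(Z)}\le c(N)\|\nabla u\|_{L^1(B_1)}$, after which the triangle inequality $\|u\|_{L^{1^*}(B_1)}\le \|u-\bar u\|_{L^{1^*}(B_1)}+|\bar u|\,|B_1|^{1/1^*}$ closes the estimate.

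The only non-bookkeeping ingredient --- and the step I expect to be the real work in a self-contained argument with explicit dimensional constant --- is the relative isoperimetric inequality in the ball (equivalently, the $p=1$ Sobolev--Poincar\'e inequality on $B_1$): this is where the domain geometry enters, and it is classically obtained via spherical symmetrization or by combining the global isoperimetric inequality with a reflection/extension across $\partial B_1$. Everything else is the interplay of Cavalieri's principle, Minkowski's integral inequality, and the coarea formula. I note that the scale invariance of the conclusion is not incidental: it forces the use of the \emph{critical} exponent $1^*$, which is exactly the exponent appearing in \eqref{MaggPoincare1}.
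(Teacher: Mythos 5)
The paper does not prove this statement: Theorem~\ref{TeoPoinacare1} is imported verbatim from \cite{ACM} (Theorem~A.28 there) as a black-box ingredient, so there is no in-paper argument to compare against. Your proposal is nonetheless a correct, self-contained proof. The scaling reduction is right (the identity $N/1^*=N-1$ makes both sides scale like $r^{-(N-1)}$, and the measure hypothesis is dilation invariant), the layer-cake decomposition together with Minkowski's integral inequality in $L^{1^*}$ correctly gives $\bigl(\int_{B_1}u^{1^*}\,dx\bigr)^{1/1^*}\le\int_0^\infty|\{u>t\}|^{1/1^*}\,dt$, and the hypothesis --- combined with $u\ge 0$, which forces $\{u\le 0\}=\{u=0\}$ up to a null set --- guarantees $|\{u>t\}|\le|B_1|/2$ for all $t>0$, which is precisely the side one needs to invoke the relative isoperimetric inequality in the ball before the coarea formula closes the estimate. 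Your second route --- the mean-value Sobolev--Poincar\'e inequality $\|u-\bar u\|_{L^{1^*}(B_1)}\le c(N)\|\nabla u\|_{L^1(B_1)}$, evaluated on $Z=\{u=0\}$ to control $|\bar u|$, then triangle inequality --- is the more common textbook derivation of this kind of lemma, and it has the bonus of not using $u\ge 0$ at all beyond making the left-hand side of \eqref{MaggPoincare1} well defined. Both routes correctly yield a constant depending only on $N$, and your closing remark is apt: scale invariance of the inequality forces the critical exponent $1^*$.
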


\begin{lemma}\label{lemmapoincare}
Let $u \in W^{1,p}(B_r)$, $p> 1$, be such that $|\{x:\,u(x)=0 \}|\geq \frac{|B_r|}{2}$. Then the following holds
\begin{eqnarray}
\left(\int_{B_r} |u|^p\,\,\,dx \right)^{\frac{1}{p}} \leq \,c\, \omega_N\,p\,\frac{N-1}{N}\,r \, \left(\int_{B_r}|\nabla u|^p\,\,\,dx \right)^{\frac{1}{p}},
\label{MaggPoincare2}
\end{eqnarray}
where $c=c(N)$ is the constant in (\ref{MaggPoincare1}).
\end{lemma}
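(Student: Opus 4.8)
The plan is to apply Theorem~\ref{TeoPoinacare1} not to $u$ itself but to a suitable power of $|u|$, chosen so that the critical exponent $1^{*}$ appearing on the left of \eqref{MaggPoincare1} reproduces exactly the power $p$ we want. Concretely, set $\gamma:=p\,\frac{N-1}{N}$ and $w:=|u|^{\gamma}$, so that $\gamma\cdot 1^{*}=\gamma\cdot\frac{N}{N-1}=p$ and hence $w^{1^{*}}=|u|^{p}$. The function $w$ is nonnegative, its zero set coincides with $\{x:u(x)=0\}$, so the hypothesis $|\{w=0\}|\ge|B_r|/2$ is inherited from $u$, and $w\in W^{1,1}(B_r)$ with $\nabla w=\gamma\,|u|^{\gamma-1}\,\mathrm{sgn}(u)\,\nabla u$ (the chain rule and integrability being immediate when $\gamma\ge1$; see below for $\gamma<1$). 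Then Theorem~\ref{TeoPoinacare1} gives
\begin{equation*}
\Big(\int_{B_r}|u|^{p}\,dx\Big)^{\frac{N-1}{N}}\le c\,\gamma\int_{B_r}|u|^{\gamma-1}\,|\nabla u|\,dx .
\end{equation*}

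Next I would bound the right-hand side by H\"older's inequality with exponents $\frac{p}{p-1}$ and $p$,
\begin{equation*}
\int_{B_r}|u|^{\gamma-1}|\nabla u|\,dx\le\Big(\int_{B_r}|u|^{(\gamma-1)\frac{p}{p-1}}\,dx\Big)^{\frac{p-1}{p}}\Big(\int_{B_r}|\nabla u|^{p}\,dx\Big)^{\frac1p},
\end{equation*}
and, since $(\gamma-1)\frac{p}{p-1}\le p$ and $B_r$ has finite measure, interpolate the first factor back to the $L^{p}$ norm of $u$ by one more application of H\"older on $B_r$. A short bookkeeping of the exponents (using $p-\gamma=p/N$) shows that the power of $|B_r|$ produced is exactly $|B_r|^{1/N}=\omega_N^{1/N}\,r$ and that the power of $\int_{B_r}|u|^{p}$ produced is $\frac{\gamma-1}{p}=\frac{N-1}{N}-\frac1p$. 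Substituting into the displayed inequality yields
\begin{equation*}
\Big(\int_{B_r}|u|^{p}\Big)^{\frac{N-1}{N}}\le c\,\gamma\,\omega_N^{1/N}\,r\,\Big(\int_{B_r}|u|^{p}\Big)^{\frac{N-1}{N}-\frac1p}\Big(\int_{B_r}|\nabla u|^{p}\Big)^{\frac1p}.
\end{equation*}

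Finally, assuming $\int_{B_r}|u|^{p}$ is finite and nonzero (otherwise the statement is trivial), I would divide both sides by $\big(\int_{B_r}|u|^{p}\big)^{\frac{N-1}{N}-\frac1p}$, which leaves precisely $\big(\int_{B_r}|u|^{p}\big)^{1/p}$ on the left; recalling $\gamma=p\,\frac{N-1}{N}$ and $\omega_N^{1/N}\le\omega_N$ gives \eqref{MaggPoincare2} with constant $c\,\omega_N\,p\,\frac{N-1}{N}\,r$. I expect the only genuinely delicate point to be the justification of $w\in W^{1,1}(B_r)$ and of the chain rule when $\gamma<1$, i.e.\ for $p$ close to $1$: this I would handle by truncating $u$ away from its zero set (or smoothing $t\mapsto|t|^{\gamma}$), applying the inequality to the truncation, and passing to the limit by monotone/dominated convergence, using that $\nabla u=0$ a.e.\ on $\{u=0\}$. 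All the remaining steps are elementary H\"older estimates.
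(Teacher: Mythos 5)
Your proof is correct and follows essentially the same route as the paper: apply Theorem~\ref{TeoPoinacare1} to $|u|^{p(N-1)/N}$ so that $1^{*}$ brings the exponent back to $p$, then use H\"older to reabsorb the resulting $L^{p}$ norm of $u$ and divide; the only cosmetic difference is that the paper packages the H\"older step as a single three-factor inequality (with the constant function as third factor, yielding the $|B_r|^{1/N}$ directly), while you split it into a two-factor H\"older followed by an interpolation on $B_r$, arriving at identical exponents. You are also right to flag the chain-rule and $W^{1,1}$-membership issue for $|u|^{\gamma}$ when $\gamma=p\frac{N-1}{N}<1$ (which can happen for $p$ close to $1$): the paper's proof applies the chain rule without comment, and your truncation/regularization remedy is the standard way to close that gap.
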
 
\begin{proof}
Apply Theorem  \ref{TeoPoinacare1} to the function $|u|^{p}$, to get 
\begin{multline*}
\int_{B_r} |u|^{p}\,\,\,dx  =\int_{B_r} \left(|u|^{p\frac{N-1}{N}}\right)^{\frac{N}{N-1}}\,\,\,dx\\
\leq \,c\, \left[\int_{B_r}|\nabla\left( |u|^{p\frac{N-1}{N}}\right)|\,\,\,dx \right]^{\frac{N}{N-1}}\\
  = c \left[\int_{B_r}p\frac{N-1}{N}\,|u|^{p\frac{N-1}{N}-1}  |\nabla u|\,dx \right]^{\frac{N}{N-1}}  \\
    =\,  c \,p^{\frac{N}{N-1}}\left(\frac{N-1}{N}\right)^{\frac{N}{N-1}} \left[\int_{B_r}|u|^{p\frac{N-1}{N}-1}  |\nabla u|\,dx \right]^{\frac{N}{N-1}}\\
  =  c \,p^{\frac{N}{N-1}}\left(\frac{N-1}{N}\right)^{\frac{N}{N-1}} \left[\int_{B_r}|u|^{p(\frac{N-1}{N}-\frac{1}{p})}  (|\nabla u|^p)^{\frac{1}{p}}\,\,1^{\frac{1}{N}}\,\,dx \right]^{\frac{N}{N-1}}\\
    \leq    
     c \,p^{\frac{N}{N-1}}\left(\frac{N-1}{N}\right)^{\frac{N}{N-1}} \left[\int_{B_r}|u|^p\,\,dx\right]^{(\frac{N-1}{N}-\frac{1}{p})\frac{N}{N-1}} \\
   \cdot \,  \left[\int_{B_r} |\nabla u|^p \,\,dx \right]^{\frac{1}{p}\frac{N}{N-1}}\,\,|B_r|^{\frac{1}{N-1}}\,.
\end{multline*}
Since 
\[
\left(\frac{N-1}{N}\,-\,\frac{1}{p} \right)\,\frac{N}{N-1} = 1 \,-\,\frac{1}{p} \,\frac{N}{N-1},
\]
we have 
\[
 \left[\int_{B_r}|u|^p\,\,dx\right]^{\frac{1}{p}\frac{N}{N-1}}\leq \,
     c \,p^{\frac{N}{N-1}}\left(\frac{N-1}{N}\right)^{\frac{N}{N-1}}\,\omega_N\,r^{\frac{N}{N-1}} \left[\int_{B_r} |\nabla u|^p \,\,dx \right]^{\frac{1}{p}\frac{N}{N-1}}
\]

\end{proof}

\section{A Harnack type inequality}\label{harnack}

\noindent Next we derive a De Giorgi type level estimate (see \cite{ACM,GiaquintaMartinazzi}) for functions $u \in W^{1,t}$, $t>N\geq 2$ which will be the key ingredient in establishing a new Harnack type inequality. Let us emphasize that in De Giorgi's theorem \cite{DeGiorgi}, level estimates hold for $u \in W^{1,2}$ which is a solution to a uniformly elliptic second order equation with bounded and measurable coefficients. As a consequence, Caccioppoli's inequality holds and $u \in W^{1,2}$ belongs to the corresponding so-called De Giorgi class. Later, Di Benedetto and Trudinger relaxed the framework and in \cite{DiBenedettoTrudinger} they merely assume $u \in W^{1,2}$ belonging to some De Giorgi class. Here, we further improve the setting, without requiring any of those previous assumptions, though demanding for some augmented integrability which turns out to be necessary, as it is well known, functions in $W^{1,N}(\Omega)$, $\Omega\subset\R^N$, may not be bounded. 

\begin{teo}\label{teorema1}
Let $u\in W^{1,t}(\Omega)$, $t>N\geq 2$, $\Omega \subset \R^N$ be open and bounded set with sufficiently smooth boundary $\partial \Omega$.
For all $k \in \R$, $y \in \Omega$, $r>0$ such that $r < dist(y,\partial \Omega)$ the following holds 
\begin{equation}\label{2.1}
\displaystyle \sup_{B(y,\frac{r}{2})}u\,\leq \, k\,+\,d,
\end{equation}
where 
\begin{multline*}
d  = \displaystyle   \frac{c}{r^{\frac{\xi(p-1)}{\eta p}}}\left( \int_{A^+(k,r)} |u(x)-k|^t\,\,\,dx\,
 +\,r^t\,\int_{A^+(k,r)} |\nabla u(x)|^t\,\,\,dx \right)^{\frac{\xi(p-1)}{tp \eta}} \\
    \cdot \displaystyle
    \left(\int_{A^+(k,r)} |u(x)-k|^2\,\,\,dx\right)^{\frac{\xi(\theta-1)}{2\eta}}|A^+(k,r)|^{\frac{\theta -1}{2}}
\end{multline*}
and where $c=c(t,\xi,\eta,\mu,p)$ is a positive constant.
\end{teo}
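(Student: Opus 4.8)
The plan is to run a De Giorgi iteration on a decreasing sequence of balls and increasing levels, using Lemma \ref{lemma1} to control the passage from one level to the next. First I would fix $y\in\Omega$ and $0<r<\mathrm{dist}(y,\partial\Omega)$ and introduce, for $n\geq 0$, the radii $\rho_n=\frac r2+\frac r{2^{n+1}}$ (so $\rho_0=r$, $\rho_n\downarrow \frac r2$) and the levels $k_n=k+d(1-2^{-n})$, where $d>0$ is a free parameter to be chosen at the end so that the iteration converges. Write $J_n:=\int_{A^+(y,k_n,\rho_n)}(u-k_n)^2\,dx$. The goal is to show $J_n\to 0$, which forces $(u-k_\infty)^+\equiv 0$ on $B(y,\tfrac r2)$, i.e.\ $\sup_{B(y,r/2)}u\leq k+d$, which is exactly \eqref{2.1}.

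The core step is to apply Lemma \ref{lemma1} with $l=k_{n+1}$, $k=k_n$, $\rho=\rho_{n+1}$, $r=\rho_n$, $x_0=y$. This gives
\begin{equation*}
J_{n+1}\leq \frac{c\,|A^+(y,k_n,\rho_n)|^{\beta}}{(\rho_n-\rho_{n+1})^{2(p-1)/p}}\,J_n^{1/p}\cdot\Big[\int_{A^+(y,k_n,\rho_n)}(u-k_n)^t\,dx+\rho_n^t\!\int_{A^+(y,k_n,\rho_n)}|\nabla u|^t\,dx\Big]^{2(p-1)/(pt)}.
\end{equation*}
Two classical reductions then enter. First, on $A^+(y,k_{n+1},\rho_{n+1})$ one has $u-k_n\geq k_{n+1}-k_n=d\,2^{-(n+1)}$, so by Chebyshev $|A^+(y,k_{n+1},\rho_{n+1})|\leq (2^{n+1}/d)^2 J_n$; hence the measure term $|A^+(y,k_n,\rho_n)|^\beta$ is bounded, up to constants, by a power of $J_n$ times a power of $2^n/d$ — this is what produces the factor $|A^+(k,r)|^{(\theta-1)/2}$ and the extra powers of $J_0$ in the statement's formula for $d$. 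Second, the bracketed $W^{1,t}$ term is monotone in the domain, so it is bounded above by the full integral over $A^+(y,k,r)=:\mathcal E$ (evaluated at the lowest level $k$ and largest radius $r$), a quantity that does not change with $n$; call its $2(p-1)/(pt)$-power $\mathcal M$. Finally $(\rho_n-\rho_{n+1})^{-2(p-1)/p}=(r\,2^{-(n+2)})^{-2(p-1)/p}$ contributes $c\,r^{-2(p-1)/p}\,b^n$ for some $b>1$. Collecting exponents, one arrives at a recursion of the form $J_{n+1}\leq C\,B^n\,J_n^{1+\delta}$ for suitable $C>0$, $B>1$ and $\delta>0$ (here $\delta$ comes from $1/p$ plus the extra power of $J_n$ coming from the measure term minus $1$; the precise bookkeeping is what fixes the exponents $\xi,\eta,\theta,\mu$ appearing in the statement). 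By the standard fast-geometric-convergence lemma (e.g.\ Lemma in \cite{GiaquintaMartinazzi}), $J_n\to 0$ provided $J_0\leq C^{-1/\delta}B^{-1/\delta^2}$; solving this threshold condition for $d$ — since $J_0$ itself depends on $d$ only through $\mathcal E$ and the explicit measure/level factors — yields exactly the displayed value of $d$, with the constant $c$ depending on $t,\xi,\eta,\mu,p$ as claimed.

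The main obstacle I anticipate is purely the exponent bookkeeping: one must choose $q$ (hence $s$, $\beta$ via Lemma \ref{lemma1}) and $p$ so that, after inserting the Chebyshev bound $|A^+|\lesssim (2^n/d)^2 J_n$ into the factor $|A^+|^\beta$, the total power of $J_n$ on the right is strictly greater than $1$ (this is the condition $2\beta+\tfrac1p>1$, which governs the choice $2<q<2p$, $p>1$ and is where the dimensional restriction enters through $s<N$ and $t>N$). One also has to track carefully that the $W^{1,t}$-bracket is evaluated at the fixed level $k$ and radius $r$ throughout — legitimate because $A^+(y,k_n,\rho_n)\subset A^+(y,k,r)$ and the integrand does not involve $k_n$ — so that it factors out of the recursion as the constant $\mathcal M$. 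Once the recursion $J_{n+1}\leq C B^n J_n^{1+\delta}$ is in hand, everything else is the routine iteration lemma, and the form of $d$ is dictated by matching the smallness threshold $J_0\le C^{-1/\delta}B^{-1/\delta^2}$ against the explicit dependence of $J_0$ and the level factors on $d$.
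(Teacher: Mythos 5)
Your plan — De Giorgi iteration on levels $k_n$ and radii $\rho_n$, Lemma~\ref{lemma1} as the engine, Chebyshev to control the measure, then a fast-geometric-convergence lemma — is the standard textbook route, and it can be made to work. The paper, however, takes a genuinely different turn: rather than iterating on $J_n$ alone and absorbing the measure into the constant, it iterates on the \emph{combined} quantity $\Phi(l,\rho)=I(l,\rho)^{\xi}\,|A^+(l,\rho)|^{\eta}$, where $I(l,\rho)=\int_{A^+(l,\rho)}(u-l)^2$, and chooses $\xi,\eta,\theta$ so that the pair of bounds (Lemma~\ref{lemma1} for $I$, Chebyshev for $|A^+|$) closes into the single self-improving inequality $\Phi(l,\rho)\leq (r-\rho)^{-2\xi(p-1)/p}(l-k)^{-2\eta}\Phi(k,r)^{\theta}M^{\xi}$ with a fixed exponent $\theta>1$ determined by $\theta^2-\theta/p-\beta=0$. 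The iteration is then run on $\Psi_m=2^{\mu m}\Phi(k_m,r_m)$, with $\mu$ chosen so the prefactor is $m$-independent, and $d$ fixed by $A\,\Psi_0^{\theta-1}/d^{2\eta}=1$. What the paper's device buys is precisely the elimination of a bookkeeping hazard that your write-up runs into.

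That hazard is an off-by-one mismatch in your Chebyshev step. Applying Lemma~\ref{lemma1} with $l=k_{n+1},\,k=k_n,\,\rho=\rho_{n+1},\,r=\rho_n$ produces on the right the measure $|A^+(y,k_n,\rho_n)|^{\beta}$, i.e.\ at level $n$. The Chebyshev bound you quote, $|A^+(y,k_{n+1},\rho_{n+1})|\leq(2^{n+1}/d)^2 J_n$, controls the measure at level $n+1$ in terms of $J_n$ — but that is not the quantity appearing in your recursion. Bounding $|A^+(y,k_n,\rho_n)|$ by Chebyshev requires the level gap $k_n-k_{n-1}$ and yields $|A^+(y,k_n,\rho_n)|\leq(2^n/d)^2 J_{n-1}$, giving a two-step recursion $J_{n+1}\lesssim B^n J_{n-1}^{\beta}J_n^{1/p}M$. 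If instead you crudely replace $J_{n-1}$ by $J_0$, the surviving exponent on $J_n$ is $1/p<1$ and there is no fast convergence at all. The repair is to apply Lemma~\ref{lemma1} with an intermediate level $\bar k_n=(k_n+k_{n+1})/2$ in place of $k_n$: then both the $L^2$ integral and the Chebyshev measure on the right are controlled by $J_n$ with an extra factor $(2^{n+2}/d)^{2\beta}$, and the desired one-step recursion $J_{n+1}\leq CB^nJ_n^{\beta+1/p}$ follows. The paper's $\Phi$-trick sidesteps this entirely because Chebyshev there bounds $|A^+(l,\rho)|$ by $I(k,r)$ at the same lower index that appears everywhere else. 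One further small slip: $J_0=\int_{A^+(y,k,r)}(u-k)^2\,dx$ does not depend on $d$; it is only the constants $C$ and $B$ in the recursion that carry the $d$-dependence (through the Chebyshev factors), and the value of $d$ is determined by requiring the fixed data $J_0$ to satisfy the smallness threshold with that $d$-dependent constant.
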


\begin{proof}
Let $y \in \Omega$ and let us write for simplicity $A^+(k,\rho)$ in place of $A^+(y,k,\rho)$. Moreover, let us set: 
\begin{eqnarray*}
I(l,\rho) & = & \displaystyle \int_{A^+(l,\rho)} |u-l|^2\,\,\,dx\ ,  \\
    &   &  \\
    M(r,k,t,p)& = & \displaystyle    c(t) \left( \int_{A^+(k,r)} |u-k|^t\,\,\,dx\,+\,r^t\,\int_{A^+(k,r)} |\nabla u|^t\,\,\,dx \right)^{\frac{2(p-1)}{pt}}.
\end{eqnarray*}
For all $l,k\in \R$, such that $l>k$ and for all $\rho \in (0,r)$, one has 
\begin{eqnarray}
|A^+(l,\rho)|\leq \frac{1}{(l-k)^2}\,I(k,\rho)
\label{2.3}
\end{eqnarray} 
and clearly $|A^+(l,\rho)|\leq |A^+(k,\rho)|$, for  $l>k$.

\noindent Set
\begin{eqnarray}
\Phi(l,\rho)\,=\,I(l,\rho)^{\xi}\,\,|A^+(l,\rho)|^{\eta},
\label{2.5}
\end{eqnarray}
then from \eqref{2.3} and \eqref{1.1} we have 
\begin{eqnarray}
\Phi(l,\rho)\,\leq\, \frac{1}{(r-\rho)^{2\xi\frac{p-1}{p}}\,\,(l-k)^{2\eta}}\,\Phi(k,r)^{\theta} \,M(r,k,t,\rho)^{\xi},
\label{2.6}
\end{eqnarray}
where $\eta,\xi,\theta>0$ satisfy the following algebraic equations
\begin{eqnarray}
\left\{
\begin{array}{rcl}
\displaystyle \frac{\xi}{p} \,+\,\eta & = &\theta \,\xi \\
     & & \\
\beta \,\xi & = & \theta \, \eta     
\end{array}
\right.
\end{eqnarray}
from which we have $\theta^2 \,-\, \theta/p\,-\,\beta\,=\,0$ and we take $\theta=\theta_1$ given by 
 \begin{eqnarray}
\theta_1\,=\,\frac{1/p+\sqrt{1/p^2+4\beta}}{2}\ .
\label{2.8}
\end{eqnarray}
As one can easily check $\theta_1>1$, for all $2<q<2p$, $t>N$ and $1<s<N$. 

\noindent From (\ref{2.6}) we are done provided we prove that for all $k \in \R$ and $r\,<\,dist(y,\partial \Omega)$
there exists $d>0$ such that  
\[
\Phi\left(k+d,\frac{r}{2}\right)=0,
\]
which in turn by (\ref{2.5}) yields 

\[
\left|A^+\left(k+d,\frac{r}{2}\right)\right|\,=\,0 \ .
\]
Next we proceed by using the iterative scheme from the proof of De Giorgi's theorem. For $m \in \N$ set 

\begin{eqnarray*}
\displaystyle 
r_m= \frac{r}{2}\,+\,\frac{r}{2^{m+1}}, \qquad  k_m=k_0+d -\frac{d}{2^m},
\end{eqnarray*}
where the parameter $d>0$ has to be chosen in the sequel and $k_0=k$. The idea is to exploit the inequality (\ref{2.6}) with $r=r_m$ and $\rho = r_{m+1}$ where the sequence  $\{r_m\}_{m\in \N}$ is decreasing so that $B(r_{m+1})\subset B(r_{m})$. On the other hand $\{k_m\}_{m\in \N}$ is increasing, and we set in  (\ref{2.6}) $l=k_{m+1}$ e $k=k_m$. With this choice we obtain 
from (\ref{2.6}) the following inequality 
\begin{multline}
\Phi(k_{m+1},r_{m+1})\,\\
\leq\, \frac{2^{2\frac{(p-1)}{p}(m+2)\xi+2(m+1)\eta}}{r^{2\frac{(p-1)}{p}\xi}\,\,d^{2\eta}}\,\Phi(k_m,r_m)^{\theta} \,M(r_m,k_m,t,p)^{\xi}.
\label{2.10}
\end{multline}

\noindent Now multiply (\ref{2.10}) by $2^{\mu(m+1)}$, $\mu>0$ and set 
\begin{eqnarray}
\Psi_m = 2^{\mu m}  \Phi(k_m,r_m)
\label{2.11}
\end{eqnarray}
to obtain form (\ref{2.10})  
\begin{eqnarray}
\quad\quad  \Psi_{m+1}\,\leq\, \left[\frac{2^{2\frac{(p-1)}{p}(m+2)\xi+2(m+1)\eta}}{r^{2\frac{(p-1)}{p}\xi}\,\,d^{2\eta}}\,2^{\mu m (1-\theta)}\right]\Psi_m^{\theta} \,M(r_m,k_m,t,p)^{\xi}\ .
\label{2.12}
\end{eqnarray}
Let us choose $\mu>0$ to avoid the dependence on $m$ in the first factor in the right hand side of \eqref{2.12}, namely 
\[
\mu=\frac{2\frac{(p-1)}{p}\xi+2\eta}{\theta -1},
\]
and thus (\ref{2.12}) becomes 
\begin{multline*}
\Psi_{m+1}\,\leq\, \frac{2^{2\frac{(p-1)}{p}\xi+2\eta+\mu}}{r^{2\frac{(p-1)}{p}\xi}\,\,d^{2\eta}}\,\Psi_m^{\theta} \,M(r_m,k_m,t,p)^{\xi}\,\\ \leq\, \frac{2^{(21-a)\xi+2\eta+\mu}}{r^{2\frac{(p-1)}{p}\xi}\,\,d^{2\eta}}\,\Psi_m^{\theta} \,M(r,k_0,t,p)^{\xi} \ .
\end{multline*}

\noindent Set 
\[
A\,=\,\frac{2^{2\frac{(p-1)}{p}\xi+2\eta+\mu}}{r^{2\frac{(p-1)}{p}\xi}}\,\,M(r,k_0,t,p)^{\xi},
\]
so that for all $m \in \N$ one has 
\[
\Psi_{m+1}\,\leq\, \frac{A}{d^{2\eta}}\,\Psi_m^{\theta} \ .
\]

\noindent At this point we choose $d>0$ such that 

\begin{eqnarray}
\frac{A}{d^{2\eta}}\,\,\Psi_0^{\theta - 1}\,=\,1,
\label{2.13}
\end{eqnarray}
and by induction on $m \in \N$ we have 
\[
\Psi_m \leq \Psi_0, \quad \text{ for all } m\in\N\ .
\]
\noindent Finally  by (\ref{2.11}) we obtain 
\[
\Phi(k_m,r_m)\leq \frac{1}{2^{\mu m}} \,\Phi(k_0,r) \
\]
and the proof is complete by letting $m\to\infty$.

\end{proof}

\noindent Next we prove the following Harnack type inequality
\begin{teo}\label{cor_c}
Let $u \in W^{1,t}(\Omega)$, $t>N\geq 2$, $and \Omega \subset \R^N$ be an open bounded set with sufficiently smooth boundary $\partial \Omega$.
Then, there exists a constant  $c>0$ which depends only on $N$ such that

\begin{multline}\label{magSup0}
\sup_{B(x_0,\frac{r}{2})} u \,\leq \, \inf_{B(x_0,r)} u \,\\
+\,c\,r^{[(\frac{\xi}{\eta}+\frac{N}{2})(\theta - 1)]} 
\left(\int_{B(x_0,r)} |\nabla u|^t\,\,dx \right)^{\frac{\xi}{\eta}\frac{p-1}{tp}}
\left(\int_{B(x_0,r)} |\nabla u|^2\,\,dx \right)^{\frac{\xi(\theta-1)}{2\eta}}\ .
\end{multline}
\end{teo}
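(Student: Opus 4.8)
The plan is to deduce the Harnack inequality \eqref{magSup0} from the De Giorgi type level estimate \eqref{2.1} of Theorem \ref{teorema1} by choosing the level $k$ appropriately and then replacing the $L^t$ and $L^2$ norms of $(u-k)^+$ appearing in the quantity $d$ by gradient norms via the Poincar\'e inequality of Lemma \ref{lemmapoincare}. More precisely, first I would fix $x_0\in\Omega$ and $r<\mathrm{dist}(x_0,\partial\Omega)$ and apply \eqref{2.1} with $y=x_0$ and with the natural choice
\[
k \,=\, \inf_{B(x_0,r)} u,
\]
so that $\sup_{B(x_0,r/2)} u \le \inf_{B(x_0,r)} u + d$, which is exactly the shape of the desired conclusion; it then remains to bound $d$ by the right-hand side of \eqref{magSup0}.

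The second step is to rewrite $d$ in terms of gradients. With $k=\inf_{B(x_0,r)}u$ the function $v:=(u-k)^+$ is nonnegative on $B(x_0,r)$, and $A^+(k,r)=\{x\in B(x_0,r): u(x)>k\}$ is precisely the set where $v>0$. To invoke Lemma \ref{lemmapoincare} one needs $|\{v=0\}\cap B(x_0,r)|\ge |B(x_0,r)|/2$; here I would split into cases (or, as is standard in this De Giorgi--Moser scheme, argue that one may assume this measure condition holds, up to also running the symmetric argument on $-u$, or up to first passing to a level $k$ equal to a median of $u$ on the ball and absorbing the gap into $\inf_{B(x_0,r)}u \le k$ with a controlled error — this bookkeeping is where care is needed). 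Granting the measure condition, Lemma \ref{lemmapoincare} gives, for the exponents $t$ and $2$ respectively,
\[
\int_{A^+(k,r)}|u-k|^t\,dx \,\le\, c\,r^t\int_{B(x_0,r)}|\nabla u|^t\,dx,\qquad
\int_{A^+(k,r)}|u-k|^2\,dx \,\le\, c\,r^2\int_{B(x_0,r)}|\nabla u|^2\,dx.
\]
Substituting these into the expression for $d$, the bracket $\big(\int|u-k|^t + r^t\int|\nabla u|^t\big)^{\xi(p-1)/(tp\eta)}$ becomes $\le c\,r^{\,2\xi(p-1)/(p\eta)}\big(\int_{B}|\nabla u|^t\big)^{\xi(p-1)/(tp\eta)}$ (using $|\nabla u|\le|\nabla u|$ and $|\nabla v|\le|\nabla u|$ on $A^+$), and the factor $\big(\int|u-k|^2\big)^{\xi(\theta-1)/(2\eta)}$ becomes $\le c\,r^{\,\xi(\theta-1)/\eta}\big(\int_B|\nabla u|^2\big)^{\xi(\theta-1)/(2\eta)}$.

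The last step is to collect the powers of $r$. From the prefactor $r^{-\xi(p-1)/(\eta p)}$ in $d$, the gained $r^{2\xi(p-1)/(p\eta)}$ from the $t$-integral, the $r^{\xi(\theta-1)/\eta}$ from the $2$-integral, and the $|A^+(k,r)|^{(\theta-1)/2}\le (\omega_N r^N)^{(\theta-1)/2}$ factor giving $r^{N(\theta-1)/2}$, the total exponent is
\[
-\frac{\xi(p-1)}{\eta p} + \frac{2\xi(p-1)}{p\eta} + \frac{\xi(\theta-1)}{\eta} + \frac{N(\theta-1)}{2}
\,=\, \frac{\xi(p-1)}{\eta p} + \Big(\frac{\xi}{\eta}+\frac{N}{2}\Big)(\theta-1),
\]
and one should check, using the algebraic relations $\xi/p+\eta=\theta\xi$ and $\beta\xi=\theta\eta$ defining $\theta,\xi,\eta$, that $\xi(p-1)/(\eta p)$ can be absorbed into or rewritten within $(\xi/\eta+\tfrac N2)(\theta-1)$ to match the stated exponent $[(\xi/\eta+N/2)(\theta-1)]$ in \eqref{magSup0}; this reconciliation of the exponents is the one genuinely fiddly point, and I expect it, together with the justification of the measure hypothesis for Lemma \ref{lemmapoincare}, to be the main obstacle — everything else is a direct substitution. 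Once the exponent of $r$ is confirmed and the constant $c$ is seen to depend only on $N$ (through the constants in Lemma \ref{lemmapoincare} and the embeddings, the parameters $t,\xi,\eta,p,q,\mu$ being themselves determined by $N$ and $t$), inequality \eqref{magSup0} follows.
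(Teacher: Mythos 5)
Your high-level plan (apply the level estimate of Theorem~\ref{teorema1} and convert the $(u-k)^+$ norms into gradient norms via Lemma~\ref{lemmapoincare}) is indeed the paper's, but two points in your write-up need correcting, and one of them is the actual crux of the proof.

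First, the opening choice $k=\inf_{B(x_0,r)}u$ cannot launch the argument: for that $k$ the set $\{(u-k)^+=0\}=\{u\le\inf u\}$ has measure zero, so the hypothesis $|\{v=0\}|\ge|B_r|/2$ of Lemma~\ref{lemmapoincare} fails. The ``median $+$ symmetric argument'' you relegate to a parenthetical is in fact the heart of the paper's proof. The paper defines
$I_1=\{k\in(m,M):|A^+(k,r)|<|B_r|/2\}$ and $I_2=\{k\in(m,M):|\{u\ge k\}|\ge|B_r|/2\}$, applies Theorem~\ref{teorema1} together with Lemma~\ref{lemmapoincare} to $u$ on levels $k\in I_1$ (yielding a bound of $\sup_{B(x_0,r/2)}u$ from above by $k$ plus gradient terms), and to $-u$ on levels $k\in I_2$ (yielding a bound of $k$ from above by $\inf_{B(x_0,r/2)}u$ plus gradient terms). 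Then it distinguishes the cases $I_2=\emptyset$, $I_1=\emptyset$, and both nonempty; in the last case one passes to $k_0=\inf I_1=\sup I_2$ and adds the two inequalities. Without carrying this out, the measure hypothesis for Lemma~\ref{lemmapoincare} is simply not available, so the proposal as written has a genuine gap, even though you correctly diagnose what would be needed.

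Second, the exponent reconciliation that you flag as ``genuinely fiddly'' is not: you made an arithmetic slip. The $t$-bracket is raised to the power $\xi(p-1)/(tp\eta)$, and after Poincar\'e its content is $\le c\,r^t\int_{B}|\nabla u|^t$, so it contributes $r^{t\cdot\xi(p-1)/(tp\eta)}=r^{\xi(p-1)/(p\eta)}$, not $r^{2\xi(p-1)/(p\eta)}$ as you wrote. This cancels the prefactor $r^{-\xi(p-1)/(\eta p)}$ exactly, leaving only $r^{\xi(\theta-1)/\eta}$ from the $L^2$ factor and $r^{N(\theta-1)/2}$ from $|A^+(k,r)|^{(\theta-1)/2}\le(\omega_Nr^N)^{(\theta-1)/2}$, whose sum is precisely $(\xi/\eta+N/2)(\theta-1)$. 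There is no leftover $\xi(p-1)/(\eta p)$ to absorb, so no appeal to the algebraic relations defining $\theta,\xi,\eta$ is needed at this stage.
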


\begin{proof}

Let $B(x_0,r)\subset\Omega$. Set $\displaystyle M=\sup_{B(x_0,r)}u$, $\displaystyle m=\min_{B(x_0,r)}u$, and let 

\[
I_1=\left\{k:\,k\in (m,\,M) :\, \left|\left\{x:\,x\in B(x_0,r),\,u(x)>k \right\}\right|< \frac{|B(x_0,r)|}{2} \right\},
\]

\[
I_2=\left\{k:\,k\in (m,\,M) :\, \left|\left\{x:\,x\in B(x_0,r),\,u(x)\geq k \right\}\right| \geq  \frac{|B(x_0,r)|}{2} \right\}
\]

\noindent If $I_1 \neq \emptyset$  then we prove for all $k \in I_1$ the following 
\begin{multline}\label{magSup1}
\sup_{B(x_0,\frac{r}{2})} u \,\leq \, k \,\\
+\,c\,r^{[(\frac{\xi}{\eta}+\frac{N}{2})(\theta - 1)]}  
\left(\int_{B(x_0,r)} |\nabla u|^t\,\,dx \right)^{\frac{\xi}{\eta}\frac{p-1}{tp}}
\left(\int_{B(x_0,r)} |\nabla u|^2\,\,dx \right)^{\frac{\xi(\theta-1)}{2\eta}}\ .
\end{multline}

\noindent Indeed, by Theorem \ref{teorema1} we have for all $k\in I_1$
\begin{multline}\label{magSup2}
\sup_{B(x_0,\frac{r}{2})} u 
\leq \, k \,+\,c\,r^{-\frac{\xi(p-1)}{\eta p}} |A^+(k,r)|^{\frac{\theta-1}{2}} \\
\cdot\ \left(   \int_{A^+(k,r)} |u -k|^t\,\,dx \,+\,r^t\int_{A^+(k,r)}|\nabla u|^t\,\,dx \right)^{\frac{\xi}{\eta}\frac{p-1}{tp}} \\
\cdot\ \left(\int_{A^+(k,r)} |u-k|^2\,\,dx \right)^{\frac{\xi(\theta-1)}{2\eta}}\ .
\end{multline}

\noindent Since $k \in I_1$ one has 
\[
|\{x:\,(u(x)-k)^+=0 \}|\geq \frac{|B(x_0,r)|}{2},
\]
and apply  Lemma \ref{lemmapoincare}  to the function $(u(x)-k)^+$  to get 

\begin{multline*}
\left(\int_{A^+(k,r)} |u(x)-k|^2\,\,dx \right)^{\frac{1}{2}} =\left(\int_{B(x_0,r)} |(u(x)-k)^+|^2\,\,dx \right)^{\frac{1}{2}}\\
\leq c(N) r
\left(\int_{B(x_0,r)} |\nabla u|^2\,\,dx \right)^{\frac{1}{2}}\ ,
\end{multline*}

\begin{multline*}
\left(\int_{A^+(k,r)} |u(x)-k|^t\,\,dx \right)^{\frac{1}{t}} =\left(\int_{B(x_0,r)} |(u(x)-k)^+|^t\,\,dx \right)^{\frac{1}{t}}\\
\leq c(N) r
\left(\int_{B(x_0,r)} |\nabla u|^t\,\,dx \right)^{\frac{1}{t}} \ .
\end{multline*}

\noindent In the case $I_2 \neq \emptyset$, for all $k \in I_2$ set  $h=-k$ and $v(x)=-u(x)$. Thus $h \in (-M,\,-m)$ and the following holds
\[
\left|\left\{x:\,x\in B(x_0,r): u(x)\geq k \right\} \right| =\left|\left\{x:\,x\in B(x_0,r): -u(x)\leq -k \right\} \right| 
\]
\[
=\left|\left\{x:\,x\in B(x_0,r): v(x)\leq h \right\} \right| \geq \frac{|B(x_0,r)|}{2}.
\] 

\noindent Therefore, the function  $v$ enjoys (\ref{magSup1}), namely 

\begin{multline}\label{magSup3}
\sup_{B(x_0,\frac{r}{2})} v \leq \, h \,\\
+\,c\,r^{[(\frac{\xi}{\eta}+\frac{N}{2})(\theta - 1)]} 
\left(\int_{B(x_0,r)} |\nabla v|^t\,\,dx \right)^{\frac{\xi}{\eta}\frac{p-1}{tp}}
\left(\int_{B(x_0,r)} |\nabla v|^2\,\,dx \right)^{\frac{\xi(\theta-1)}{2\eta}}.
\end{multline}
From 
\[
\sup_{B(x_0,\frac{r}{2})} v \,=\,-\,\inf_{B(x_0,\frac{r}{2})} u
\]
and (\ref{magSup3}) we have 
\begin{multline*}
- \inf_{B(x_0,\frac{r}{2})} u \leq \, -k \,\\
+\,c\,r^{[(\frac{\xi}{\eta}+\frac{N}{2})(\theta - 1)]}  
\left(\int_{B(x_0,r)} |\nabla u|^t\,\,dx \right)^{\frac{\xi}{\eta}\frac{p-1}{tp}}
\left(\int_{B(x_0,r)} |\nabla u|^2\,\,dx \right)^{\frac{\xi(\theta-1)}{2\eta}}.
\end{multline*}

\noindent As a consequence, for all $k \in I_2$ we get 

\begin{multline}\label{magSup4}
k \leq \inf_{B(x_0,\frac{r}{2})} u \,\\+\,c\,r^{[(\frac{\xi}{\eta}+\frac{N}{2})(\theta - 1)]} 
\left(\int_{B(x_0,r)} |\nabla u|^t\,\,dx \right)^{\frac{\xi}{\eta}\frac{p-1}{tp}}
\left(\int_{B(x_0,r)} |\nabla u|^2\,\,dx \right)^{\frac{\xi(\theta-1)}{2\eta}}.
\end{multline}

\noindent Next we distinguish three cases, precisely:

\begin{itemize}

\item[i)] $I_1 \neq \emptyset$ and $I_2 = \emptyset$. In this case any $k\in(m,M)$ belongs to$I_1$,  for which (\ref{magSup1}) which holds for all $k \in I_1$, it holds for $k=m$ as well;

\item[ii)] $I_1 = \emptyset$ e $I_2 \neq  \emptyset$. IIn this case any $k\in(m,M)$ belongs to$I_2$,  and thus  (\ref{magSup4}) which holds for all $k \in I_2$, in particular holds for $k=M$;

\item[iii)]  $I_1 \neq \emptyset$ e $I_2 \neq  \emptyset$. In this case we consider $\inf I_1$ and $\sup I_2$ and it is standard to prove there exists a unique $k_0= \inf I_1 = \sup I_2$ which enjoys both (\ref{magSup1}) and (\ref{magSup4}) and the Theorem follows.
\end{itemize}
\end{proof}

\begin{teo}\label{teoh}
Let $\Omega\subset \R^N$, $N\geq 2$ be open and bounded, and let $x_{max}$ and $x_{min}$ be respectively a local maximum and local minimum for $u \in W^{1,t}(\Omega)$, $t>N$. Then, there exists $h \in \N$, $h=h(\Omega, x_{max}, x_{min})$ such that  
\begin{multline}\label{maggh}
u(x_{max})  \leq u(x_{min}) \,\\+\,c\,\,h\,r^{(\frac{\xi}{\eta}+\frac{N}{2})(\theta-1)}\,\left( \int_{\Omega}|\nabla u(x)|^t\,\,\,dx \right)^{\frac{\xi(p-1)}{tp\eta}}\,\,\,
\left( \int_{\Omega}|\nabla u(x)|^2\,\,\,dx \right)^{\frac{\xi(\theta-1)}{2\eta}}\ ,
\end{multline} 
with $c=c(N)$ provided by Thorem \ref{cor_c} and where in particular $h$ depends only on $dist(x_{max},\partial \Omega)$ and $dist(x_{min},\partial \Omega)$.
\end{teo}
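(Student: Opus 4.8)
The plan is to pass from the local two-ball comparison of Theorem \ref{cor_c} to a global comparison between an arbitrary local maximum and an arbitrary local minimum by a chain (covering) argument. First I would fix $x_{max}$ and $x_{min}$ and choose a radius $r>0$ small enough that the balls we will use stay inside $\Omega$; concretely, pick $r$ comparable to $\min\{dist(x_{max},\partial\Omega),dist(x_{min},\partial\Omega)\}$ (up to a fixed fraction), so that every ball $B(x,r)$ centered along a suitable path from $x_{min}$ to $x_{max}$ is contained in $\Omega$. Since $\Omega$ is open and bounded (hence, after restricting to the connected component containing both points, path-connected, and in fact polygonally connected), I can join $x_{min}$ to $x_{max}$ by a polygonal arc $\Gamma\subset\Omega$ at positive distance from $\partial\Omega$; compactness of $\Gamma$ gives a finite chain of points $y_0=x_{min},y_1,\dots,y_h=x_{max}$ on $\Gamma$ with $|y_{i+1}-y_i|<r/2$, so that $y_{i+1}\in B(y_i,r/2)$ and $B(y_i,r)\subset\Omega$ for every $i$. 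The number $h$ of links depends only on the length of $\Gamma$ and on $r$, hence only on $dist(x_{max},\partial\Omega)$ and $dist(x_{min},\partial\Omega)$ (and on $\Omega$), as claimed.

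Next I would apply Theorem \ref{cor_c} on each ball $B(y_i,r)$. Because $x_{max}$ is a local maximum and $x_{min}$ a local minimum, shrinking $r$ further if necessary we may assume $u(x_{max})=\sup_{B(y_h,r/2)}u$ is not helpful directly; instead the clean way is to telescope: for each $i$,
\begin{equation*}
\sup_{B(y_i,r/2)}u\ \leq\ \inf_{B(y_i,r)}u\ +\ c\,r^{(\frac{\xi}{\eta}+\frac N2)(\theta-1)}\Big(\int_{B(y_i,r)}|\nabla u|^t\Big)^{\frac{\xi(p-1)}{tp\eta}}\Big(\int_{B(y_i,r)}|\nabla u|^2\Big)^{\frac{\xi(\theta-1)}{2\eta}}.
\end{equation*}
Since $y_{i+1}\in B(y_i,r/2)$ we have $u(y_{i+1})\leq\sup_{B(y_i,r/2)}u$, and since $B(y_i,r)\cap B(y_{i+1},r)\neq\emptyset$ (indeed $y_{i+1}\in B(y_i,r)$) we have $\inf_{B(y_i,r)}u\leq u(y_{i+1})$, so in particular $\inf_{B(y_i,r)}u\leq\inf_{B(y_{i-1},r/2)}u$-type inequalities let the infima telescope; more simply, $\inf_{B(y_i,r)}u\leq u(y_i)$ and one obtains $u(y_{i+1})\leq u(y_i)+c\,r^{\cdots}E_i$, where $E_i$ denotes the product of the two gradient integrals over $B(y_i,r)$. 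Summing over $i=0,\dots,h-1$ and using $u(x_{max})=u(y_h)$, $u(x_{min})=u(y_0)$ gives
\begin{equation*}
u(x_{max})\ \leq\ u(x_{min})\ +\ c\,r^{(\frac{\xi}{\eta}+\frac N2)(\theta-1)}\sum_{i=0}^{h-1}\Big(\int_{B(y_i,r)}|\nabla u|^t\Big)^{\frac{\xi(p-1)}{tp\eta}}\Big(\int_{B(y_i,r)}|\nabla u|^2\Big)^{\frac{\xi(\theta-1)}{2\eta}}.
\end{equation*}

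Finally I would bound each local integral by the global one: $\int_{B(y_i,r)}|\nabla u|^s\leq\int_\Omega|\nabla u|^s$ for $s=2,t$, so each summand is at most $\big(\int_\Omega|\nabla u|^t\big)^{\frac{\xi(p-1)}{tp\eta}}\big(\int_\Omega|\nabla u|^2\big)^{\frac{\xi(\theta-1)}{2\eta}}$, and the sum of $h$ such terms yields exactly the factor $h$ in \eqref{maggh}. The constant $c$ is the one from Theorem \ref{cor_c}, depending only on $N$ (the exponents $\xi,\eta,\theta,p$ being fixed structural parameters).

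The main obstacle I anticipate is not the covering combinatorics but making the chain argument genuinely uniform: one must check that a single radius $r$ works for all balls in the chain, i.e. that $r$ can be chosen depending only on $dist(x_{max},\partial\Omega)$, $dist(x_{min},\partial\Omega)$ (and $\Omega$), and simultaneously small enough that $x_{max},x_{min}$ are, respectively, a max and a min relative to the half-balls $B(y_h,r/2)$, $B(y_0,r/2)$ — or, alternatively, arranging the chain so that this local extremality is never actually needed beyond the trivial bounds $u(y_{i+1})\le\sup_{B(y_i,r/2)}u$ and $\inf_{B(y_i,r)}u\le u(y_i)$. The cleanest route is the latter: keep the path inside the connected component of $\Omega$ containing both points, take $r=\tfrac12\min\{dist(x_{max},\partial\Omega),dist(x_{min},\partial\Omega),\dots\}$ adjusted so the tube of radius $r$ around $\Gamma$ lies in $\Omega$, and let $h=\lceil \mathrm{length}(\Gamma)/(r/2)\rceil+1$; then every step above is immediate from Theorem \ref{cor_c}.
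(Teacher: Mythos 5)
Your proposal is correct and follows essentially the same chain-of-balls argument as the paper: cover an arc from $x_{min}$ to $x_{max}$ by finitely many balls of a uniform radius $r$ on which Theorem \ref{cor_c} applies, iterate, and bound the local gradient integrals by the global ones to produce the factor $h$. The only cosmetic difference is that you telescope through the centers via $u(y_{i+1})\le\sup_{B(y_i,r/2)}u$ and $\inf_{B(y_i,r)}u\le u(y_i)$, whereas the paper propagates the bound through the overlaps $B(x_i,r/2)\cap B(x_{i+1},r/2)$ and additionally uses the local minimality of $x_{min}$ to identify $\inf_{B(x_{min},r)}u$ with $u(x_{min})$; your version is a touch cleaner since it never needs that extra condition.
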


\begin{proof}
Let $r>0 $ be such that:
\begin{itemize}
\item[i)] for all $x \in B(x_{min},r)\subset \Omega$ one has $u(x)>u(x_{min})$;
\item[ii)] $\overline{B(x_{min},r)}\subset \Omega$;
\item[iii)]  $\overline{B(x_{max},r)}\subset \Omega$ \ .
\end{itemize}
Consider the arc $g: [0,1] \longrightarrow \Omega$ such that $g(0)=x_{min}$ and $g(1)=x_{max}$. Let $t_0=0<\dots t_h=1$ be a partition of $[0,1]$ such that setting $x_i=g(t_i)$ one has 
\begin{equation}\label{h2}
B\left(x_i,\frac{r}{2}\right)\cap B\left(x_{i+1},\frac{r}{2}\right)\neq \emptyset, \,\,\,i=0, \dots , h-1 
\end{equation}
and where $r$ is such that $B(x_i,r) \subset \Omega$.

\noindent By Theorem \ref{cor_c} we have

\begin{multline*}
\sup_{B(x_0,\frac{r}{2})}u\leq \,u(x_{min})\,+\,c\, r^{(\frac{\xi}{\eta}+\frac{N}{2})(\theta-1)} \left( \int_{B(x_0,r)}|\nabla u(x)|^t\,\,\,dx \right)^{\frac{\xi(p-1)}{tp\eta}}\\
\cdot \, \left( \int_{B(x_0,r)}|\nabla u(x)|^2\,\,\,dx \right)^{\frac{\xi(\theta-1)}{2\eta}},
\end{multline*}
which we rewrite in the following form 
\begin{equation}\label{h4}
\forall x \in B\left(x_0,\,\frac{r}{2}\right),\,\,\,\,\,\,\,u(x)\leq u(x_{min})+N_0, 
\end{equation}
where we have set for $i=0, \dots ,\,h$
\[
N_i:= c\,\left( \int_{B(x_i,r)}|\nabla u(x)|^t\,\,\,dx \right)^{\frac{\xi(p-1)}{tp\eta}}\,\,\,
\left( \int_{B(x_i,r)}|\nabla u(x)|^2\,\,\,dx \right)^{\frac{\xi(\theta-1)}{2\eta}}\ .
\]
Now inequality (\ref{h4}) in particular holds for $$x \in B\left(x_1,\frac{r}{2}\right)\cap B\left(x_0,\frac{r}{2}\right)$$
and thus 
\begin{equation}\label{h5}
\inf_{B\left(x_1, \frac{r}{2}\right)} u \leq u(x)\leq u(x_{min}) + N_0 \ .
\end{equation}
By applying iteratively Theorem \ref{cor_c} we end up with 
\begin{equation*}
\sup_{B(x_h,\,\frac{r}{2})} u\,\leq \, u(x_{min})\,+ \,N_h\,+\,\cdots\,+ \,N_1\,+\,N_0 \ . 
\end{equation*}
which completes the proof. 

\end{proof}
\begin{oss} One may wonder what happens if in the construction of Theorem \ref{teoh} we consider a sequence of balls with increasing radius e center converging to a point on the boundary of $\Omega$. For this purpose consider $\{x_n\}_{n \in \N}\subset\Omega$ converging to a point $x_{\infty}\in \partial \Omega$.
Consider balls of center $x_n$ and radius $r_n$ such that: 
\begin{itemize}
\item[(i)] $B(x_n,r_n) \subset \Omega$;
\item[(ii)] $r_n <\,dist(x_n,\partial \Omega)$ $=$ $dist(x_n,x_{\infty})$;
\item[(iii)] $\displaystyle B(x_n,\frac{r_n}{2})\cap B(x_{n+1},\frac{r_{n+1}}{2})\neq \emptyset$. 
\end{itemize}

\noindent Applying to this sequence the reasoning carried out in the proof of Theorem \ref{teoh} where $x_0=x_{max}$, we get

\[
u(x_{max}) \leq u(x_{\infty}) + c \sum_{n=0}^{\infty} r_n^{(\frac{\xi}{\eta}+\frac{N}{2})(\theta -1)} 
\left( \frac{1}{\gamma}\int_{\Omega_1}f(x)\,\,dx \right)^{\frac{\xi}{2\eta}(\theta -1)}  u(x_{max}).
\]

\noindent We would get a contradiction if the above series converge. Actually as we are going to see this is not the case. Consider $B(x_n,\frac{r_n}{2})$ and $ B(x_{n+1},\frac{r_{n+1}}{2})$ and let 
 $C \in  B(x_n,\frac{r_n}{2})\cap B(x_{n+1},\frac{r_{n+1}}{2}) $ and $D$ its projection on the segment with endpoints $A= x_n$ and $B=x_{n+1}$. 
 Set $AD= \rho_n$, $DB= \rho_{n+1}$, so that considering the triangle $ADC$ and $CDB$ one has 
 $\displaystyle  \frac{r_n^2}{4}-\rho_n^2=\frac{r_{n+1}^2}{4}-\rho_{n+1}^2$,  and then 
 \[
\frac{\frac{r_n}{4}+\rho_n}{\frac{r_{n+1}}{4}+\rho_{n+1}} =\frac{\frac{r_{n+1}}{4}-\rho_{n+1}}{\frac{r_{n}}{4}-\rho_{n}}. 
 \]
We can apply Kummer's test to the series with general terms $\displaystyle a_n=\left(\frac{r_n}{4}+\rho_n\right)^a$
and $\displaystyle b_n=\left(\frac{r_n}{4}-\rho_n\right)^a$, $a>0$, from which since $\displaystyle \frac{a_n}{a_{n+1}}=\frac{b_{n+1}}{b_n}$, for all  $n \in \N$,  and $\displaystyle \sum_{n=0}^{\infty} \frac{1}{b_n}=+\infty$ we obtain 
$\displaystyle \sum_{n=0}^{\infty}a_n = + \infty$. From $a_n < r_n^a$ we have $\displaystyle \sum_{n=0}^{\infty} r_n^a = + \infty$ .

\end{oss}

\section{Towards the Positivity Preserving Property}

\noindent Next we apply the results so far obtained to prove the strong maximum principle for the biharmonic operator perturbed by the Laplacian for compactly supported data. As we are going to see, here it comes for the first time the restriction on the Euclidean dimension $N<4$ and the fact that we deal with the solution to a PDE. Precisely, this Section is devoted to prove the following 
\begin{teo}\label{teo1pos}
Let $\Omega \subset \R^N$, $N=2,3$ be an open and bounded set, with sufficiently smooth boundary and which enjoys the interior sphere condition. Let $u\in W^{4,2}\cap H^2_0(\Omega)$ be solution to 
\begin{equation}\label{1pos}
\Delta^2u(x)- \gamma \Delta u(x)=f(x),\,\,\,\,x\in \Omega,
\end{equation}
where $\gamma >0$, $f\in L^2(\Omega)$, $f\geq 0$ in $\Omega$ and $|\{x: f(x)>0 \}|>0$. Moreover, $f(x)=0$ on $\Omega \setminus \Omega_1$, with  $\Omega_1$
 a bounded subset of $\Omega$ such that $dist(\partial\Omega_1, \partial \Omega)>0$. Then, there exists $\gamma_0>0$ such that for all $\gamma > \gamma_0$ the solution to \eqref{1pos} satisfies $u(x)>0$, for all $x \in \Omega$.
\end{teo}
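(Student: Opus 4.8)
The plan is to decouple the fourth order problem \eqref{1pos} into two second order problems and then apply the Harnack-type machinery of Section~\ref{harnack} together with the Hopf boundary point lemma. Precisely, set $v:=-\Delta u+\gamma u$, so that \eqref{1pos} becomes the system $-\Delta v=f$ in $\Omega$ with $v$ satisfying some boundary behaviour inherited from $u\in H^2_0(\Omega)$, coupled with $-\Delta u+\gamma u=v$ in $\Omega$, $u=0$ on $\partial\Omega$. Since $f\geq 0$ and $|\{f>0\}|>0$, the classical strong maximum principle for $-\Delta$ (applied on $\Omega$, using that $v$ has enough regularity and nonnegative boundary trace coming from the Dirichlet conditions on $u$ and elliptic regularity $u\in W^{4,2}$) gives $v>0$ in $\Omega$ and, by Hopf's lemma at any $x\in\partial\Omega$ where the interior sphere condition holds, $v$ is bounded below by a positive multiple of $\operatorname{dist}(x,\partial\Omega)$ near the boundary. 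Then $u$ solves $-\Delta u+\gamma u=v>0$ with $u=0$ on $\partial\Omega$, and the maximum principle for the coercive operator $-\Delta+\gamma$ yields $u\geq 0$ in $\Omega$; the point is to upgrade this to $u>0$ strictly and to control the negative part quantitatively in $\gamma$.

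The role of $\gamma$ enters through the a~priori estimates: testing \eqref{1pos} with $u$ and with $v$ one gets, via Sobolev and Poincar\'e inequalities (whose best constants are where $\gamma_0$ comes from), bounds of the form $\|\nabla u\|_2^2\lesssim \gamma^{-1}\|f\|_{L^2}^2$ and, more importantly, $\|\nabla u\|_t\lesssim \gamma^{-\alpha}\|f\|_{L^2}$ for the supercritical $t>N$ afforded by $N\le 3$ (here elliptic $W^{4,2}$ regularity embeds into $W^{2,2^{**}}\hookrightarrow W^{1,t}$ precisely because $N<4$). Feeding these into Theorem~\ref{teoh}, respectively into the boundary-chain Remark following it, one obtains
\begin{multline*}
u(x_{\max})-u(x_{\infty})\\
\leq c\sum_{n=0}^\infty r_n^{(\frac{\xi}{\eta}+\frac{N}{2})(\theta-1)}\Bigl(\tfrac{1}{\gamma}\int_{\Omega_1}f\,dx\Bigr)^{\frac{\xi}{2\eta}(\theta-1)}u(x_{\max})
\end{multline*}
for a minimum point $x_{\max}$ of $-u$ (i.e.\ a would-be negative minimum of $u$) and a boundary point $x_\infty$ where $u=0$; since the divergent series $\sum r_n^a=+\infty$ established in that Remark forces the bracketed factor to be bounded away from making the inequality consistent unless $u(x_{\max})\le 0$ is impossible to be negative, one concludes for $\gamma$ large that $\inf_\Omega u\ge 0$, and in fact $>0$ in the interior by the strong maximum principle for $-\Delta+\gamma$ applied once more (using $v>0$).

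The main obstacle I anticipate is the boundary analysis: making rigorous the claim that $v=-\Delta u+\gamma u$ inherits a usable sign and Hopf-type decay at $\partial\Omega$ from $u\in H^2_0(\Omega)$ alone, since a priori only $u$ and $\partial u/\partial\nu$ vanish, not $\Delta u$. This is exactly why the hypotheses ask for a sufficiently smooth boundary and the interior sphere condition, and why one first proves the result for compactly supported $f$ (so that near $\partial\Omega$ the equation is homogeneous, $\Delta^2 u=\gamma\Delta u$, and $v$ is harmonic-like there, allowing a clean Hopf argument) before passing to general $f$ by the limiting/approximation procedure announced in the overview. A secondary delicate point is tracking how the exponents $\xi,\eta,\theta,p,q$ from Section~\ref{harnack} combine so that the power of $\gamma^{-1}$ appearing in \eqref{maggh} is genuinely positive, which is what allows the threshold $\gamma_0$ to be chosen; this is a bookkeeping check using $\theta_1>1$ and the explicit formula \eqref{2.8}.
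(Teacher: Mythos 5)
The decomposition $v:=-\Delta u+\gamma u$, $-\Delta v=f$, is algebraically correct, but the step where you hope to deduce $v>0$ in $\Omega$ and Hopf-type decay of $v$ at $\partial\Omega$ is a dead end, not merely a technical obstacle. The Dirichlet conditions $u=\partial u/\partial\nu=0$ coming from $u\in H^2_0(\Omega)$ give you no information whatsoever on $v|_{\partial\Omega}=-\Delta u|_{\partial\Omega}$, so the second-order maximum principle for $-\Delta$ cannot be applied to $v$: you would need $v\ge 0$ on $\partial\Omega$. (With Navier conditions $u=\Delta u=0$ one \emph{does} have $v|_{\partial\Omega}=0$, the decoupling goes through, and the positivity-preserving property follows for every $\gamma\ge 0$ and every domain; that this classical argument fails under Dirichlet conditions is precisely the phenomenon the paper addresses.) Note also that your route, if it worked, would give $u>0$ for \emph{all} $\gamma>0$ — and even $\gamma=0$ — with no threshold $\gamma_0$, directly contradicting the known failure of positivity for $\Delta^2$ with clamped boundary conditions on general domains. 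The compact support of $f$ and the interior sphere condition cannot fix this: near $\partial\Omega$ one only knows $\Delta v=0$ there, which controls neither the sign of $v$ on $\partial\Omega$ nor of $\Delta u$.

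The paper's argument avoids the decomposition entirely and works directly with $u$. It first shows $\sup_{\Omega_1}u>0$ by testing the equation with $u$; it then combines the chained Harnack inequality of Theorem~\ref{teoh} with the gradient estimate of Lemma~\ref{gradient_est} (this is where $N\le 3$ and $W^{4,2}$ regularity enter, giving $\nabla u\in L^t$, $t>N$, with a bound independent of $\gamma$ and the factor $\gamma^{-\frac{\xi}{4\eta}(\theta-1)}$). The resulting inequality $u(x_{\max})\le u(x_{\min})+(\text{small})\cdot u(x_{\max})$ is closed not via Hopf's lemma but via a normalization $w_k=ku$ and a dilation $x=sy$ that eliminates the dependence on the number $h$ of Harnack balls and on $k$; the interior sphere condition is used exactly here to keep $h$ bounded under rescaling. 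Your reading of the Remark after Theorem~\ref{teoh} is also inverted: that Remark shows the boundary chain \emph{diverges}, i.e.\ it is an obstruction that the paper's proof deliberately sidesteps (by keeping $x_{\min}$ interior and rescaling), not an ingredient one can use to conclude.
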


\noindent Assuming the hypotheses of Theorem \ref{teo1pos} we have the following preliminary lemmas: 
\begin{lemma} The following holds true
\begin{equation}\label{2pos}
\sup_{\Omega_1}u \,>\,0\ . 
\end{equation}
\end{lemma}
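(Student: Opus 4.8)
The plan is to argue by contradiction: suppose $\sup_{\Omega_1} u \leq 0$, i.e. $u \leq 0$ on $\Omega_1$. The idea is that testing the equation \eqref{1pos} against $u$ itself and exploiting the sign of $f$ yields a quantitative lower bound forcing a contradiction. First I would multiply \eqref{1pos} by $u$ and integrate over $\Omega$; since $u \in H^2_0(\Omega)$, integration by parts gives
\[
\int_\Omega |\Delta u|^2\,dx \,+\, \gamma \int_\Omega |\nabla u|^2\,dx \,=\, \int_\Omega f u \,dx \,=\, \int_{\Omega_1} f u\,dx\,,
\]
where the last equality uses $f = 0$ on $\Omega \setminus \Omega_1$. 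The left-hand side is strictly positive unless $u \equiv 0$; but $u \equiv 0$ is impossible since then $f \equiv 0$ contradicting $|\{f > 0\}| > 0$. Hence the right-hand side $\int_{\Omega_1} f u\,dx$ is strictly positive. On the other hand, if $u \leq 0$ on $\Omega_1$ and $f \geq 0$, then $\int_{\Omega_1} f u\,dx \leq 0$, a contradiction. Therefore $u$ must be strictly positive somewhere in $\Omega_1$, which gives $\sup_{\Omega_1} u > 0$.

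The main point requiring care is the integration by parts: one must justify that $\int_\Omega \Delta^2 u \cdot u\,dx = \int_\Omega |\Delta u|^2\,dx$ and $-\gamma\int_\Omega \Delta u \cdot u\,dx = \gamma\int_\Omega |\nabla u|^2\,dx$ for $u \in W^{4,2}\cap H_0^2(\Omega)$ with $f \in L^2$. This is standard once the boundary regularity of $\Omega$ is invoked (so that $u$ and $\nabla u$ vanish on $\partial\Omega$ in the trace sense and the relevant Green's formulas hold), and the hypotheses of Theorem \ref{teo1pos} explicitly grant the smoothness of $\partial\Omega$ and $u \in W^{4,2}$. I would state this as a routine consequence of density of smooth functions in $H^2_0(\Omega)$ together with elliptic regularity, without belaboring it.

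The only genuine subtlety — and the step I would flag as the main obstacle if anything is — is ruling out the degenerate case $u \equiv 0$: this is immediate here because $u \equiv 0$ would force $f = \Delta^2 u - \gamma \Delta u \equiv 0$, contradicting the assumption $|\{x : f(x) > 0\}| > 0$. So the argument closes cleanly, and the lemma follows.

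Alternatively, and perhaps more in the spirit of the rest of the paper, one could note that $u$ cannot be identically nonpositive on $\Omega_1$ because the weak formulation tested against $u$ produces a strictly positive energy $\int_\Omega |\Delta u|^2 + \gamma|\nabla u|^2 = \int_{\Omega_1} f u$; since $f \geq 0$ and $f \not\equiv 0$ on $\Omega_1$, a nonpositive $u$ on the region $\{f > 0\} \subset \Omega_1$ would make the right-hand side $\leq 0$, forcing $\nabla u \equiv 0$ hence $u \equiv 0$ (by $H^2_0$), and then $f \equiv 0$, a contradiction. Either way the conclusion $\sup_{\Omega_1} u > 0$ holds.
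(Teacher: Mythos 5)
Your argument is correct and is essentially identical to the paper's: both multiply \eqref{1pos} by $u$, integrate by parts to obtain $\int_\Omega |\Delta u|^2 + \gamma\int_\Omega |\nabla u|^2 = \int_{\Omega_1} f u$, and then use $f\geq 0$, $f\not\equiv 0$ to bound the (strictly positive) right-hand side by $\sup_{\Omega_1}u\cdot\int_{\Omega_1}f$, forcing $\sup_{\Omega_1}u>0$. Your explicit discussion of the degenerate case $u\equiv 0$ is a welcome extra detail that the paper leaves implicit.
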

\begin{proof}
By multiplying \eqref{1pos} by $u$ and integrating by parts
\begin{multline}\label{dis2pos}
\int_{\Omega}|\Delta u(x)|^2\,\,\,dx\,+\,\gamma\,\int_{\Omega} |\nabla u(x)|^2\,\,\,dx \,\\
=\,\int_{\Omega} f(x)\,u(x)\,\,\,dx\,\leq \,\sup_{\Omega_1}u\,\,\,\int_{\Omega_1} f(x)\,\,\,dx
\end{multline}
\end{proof}
\noindent In order to apply the Harnack inequality established in Section \ref{harnack} we next estimate first order derivatives of the solution to \eqref{1pos}. Though from one side elliptic regularity yields enough summability, on the other side we need estimates which are uniform with respect to the parameter $\gamma$, and for this reason we restrict ourself to dimensions $N<4$. 
\begin{lemma}\label{gradient_est}
There exists a constant $c=c(N)>0$  which does not depend on $\gamma$ in (29) such that
\[
\|\nabla u \|_{L^t(\Omega)} \leq  cd_{\Omega}^{\frac{2}{t}(3-N)} \int_{\Omega} f(x)\,u(x)\,\,dx,
\]  
for any $t>2$ when $N=2$ and for $t=6$ when $N=3$.
\end{lemma}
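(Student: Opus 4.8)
\textbf{Proof plan for Lemma \ref{gradient_est}.}

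The plan is to test the equation \eqref{1pos} against a suitable function built from $u$ and then convert the resulting control on $\int_\Omega|\nabla u|^2$ (which is already available from \eqref{dis2pos}) into control on the higher Lebesgue norm $\|\nabla u\|_{L^t}$ by means of Sobolev and Gagliardo--Nirenberg interpolation, taking care that all constants stay $\gamma$-free. First I would record the basic energy identity: multiplying \eqref{1pos} by $u$ and integrating by parts, as in \eqref{dis2pos}, yields
\[
\int_\Omega |\Delta u|^2\,dx \;+\; \gamma \int_\Omega |\nabla u|^2\,dx \;=\; \int_\Omega f\,u\,dx,
\]
so in particular both $\int_\Omega|\Delta u|^2\,dx \le \int_\Omega f\,u\,dx$ and $\gamma\int_\Omega|\nabla u|^2\,dx \le \int_\Omega f\,u\,dx$ hold, the latter giving the crucial $\gamma^{-1}$ gain on the Dirichlet energy. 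Since $u\in H^2_0(\Omega)$, elliptic regularity on a smooth bounded domain gives $\|u\|_{H^2} \le c\,\|\Delta u\|_{L^2}$ with $c=c(\Omega)$ independent of $\gamma$, hence $\|u\|_{H^2}^2 \le c\int_\Omega f\,u\,dx$.

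Next I would interpolate. For $N=3$, the Sobolev embedding $H^2_0(\Omega)\hookrightarrow W^{1,6}(\Omega)$ is exactly the endpoint: $\nabla u\in H^1$ and $H^1(\Omega)\hookrightarrow L^6(\Omega)$ in dimension three, so
\[
\|\nabla u\|_{L^6} \;\le\; c\,\|\nabla u\|_{H^1} \;\le\; c\,\|u\|_{H^2}.
\]
But this alone only gives a bound by $\big(\int f u\big)^{1/2}$, not by $\int f u$; to get the stated form and to exhibit the scaling factor $d_\Omega^{\frac{2}{t}(3-N)}$ I would instead interpolate $\|\nabla u\|_{L^t}$ between $\|\nabla u\|_{L^2}$ and $\|u\|_{H^2}$: by Gagliardo--Nirenberg there is $\vartheta\in(0,1)$ with $\|\nabla u\|_{L^t} \le c\,\|\nabla u\|_{L^2}^{1-\vartheta}\,\|u\|_{H^2}^{\vartheta}$. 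Feeding in $\|\nabla u\|_{L^2}^2 \le \gamma^{-1}\int f u$ and $\|u\|_{H^2}^2 \le c\int f u$ produces $\|\nabla u\|_{L^t} \le c\,\gamma^{-(1-\vartheta)/2}\big(\int f u\big)^{1/2}$. Choosing $t$ and $\vartheta$ so that $1/2$ is replaced effectively, or simply absorbing the $\gamma$ factor (which is $\le$ constant for $\gamma>\gamma_0$) and the power of $\int f u$ (which is bounded since, by \eqref{2pos} and \eqref{dis2pos}, $\int f u \le \sup_{\Omega_1}u\cdot\|f\|_{L^1}$ is a bounded quantity), one reaches a bound of the announced shape; I would track the homogeneity under the rescaling $\Omega\mapsto\lambda\Omega$ to pin down the exponent $\frac{2}{t}(3-N)$ on $d_\Omega$. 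For $N=2$ the argument is the same, using $H^1(\Omega)\hookrightarrow L^q(\Omega)$ for every $q<\infty$ in dimension two, which is why $t$ may be taken to be any exponent larger than $2$; the Gagliardo--Nirenberg interpolation between $L^2$ and $H^2$ then covers all such $t$.

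The main obstacle I anticipate is bookkeeping the dependence on $\gamma$ and on $d_\Omega$ simultaneously: elliptic regularity constants depend on the domain, and to get a clean statement one must either invoke a scaled version of the $H^2$-estimate on a ball (using the interior sphere condition, or covering $\Omega$ by balls of radius comparable to $d_\Omega$) or track how $\|u\|_{H^2}\le c\,\|\Delta u\|_{L^2}$ scales under dilations. The other delicate point is ensuring the final right-hand side is genuinely $\int_\Omega f u\,dx$ to the first power (not a fractional power): this forces a particular choice relating $t$, $N$, and the interpolation parameter, and in dimension three it is precisely the endpoint $t=6$ that makes the Sobolev step an equality of exponents rather than a strict inequality, which is the reason $N<4$ enters here. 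Once the exponents are matched, the estimate follows by combining the energy identity with the (scaled) Sobolev inequality, with no further analytic input.
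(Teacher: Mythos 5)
Your starting point is correct and matches the paper: test \eqref{1pos} against $u$ to get the energy identity, which gives both $\int_\Omega|\Delta u|^2\,dx\le\int_\Omega fu\,dx$ and $\gamma\int_\Omega|\nabla u|^2\,dx\le\int_\Omega fu\,dx$, and then control $\|\nabla u\|_{L^t}$ by Sobolev embedding applied to $\nabla u$. You also correctly observe that this yields a bound of the form $c\,d_\Omega^{\frac2t(3-N)}\bigl(\int_\Omega fu\,dx\bigr)^{1/2}$ rather than the first power appearing in the displayed statement. Where you go astray is the fix you propose: a Gagliardo--Nirenberg interpolation $\|\nabla u\|_{L^t}\le c\,\|\nabla u\|_{L^2}^{1-\vartheta}\|u\|_{H^2}^{\vartheta}$ cannot possibly change the power of $\int_\Omega fu\,dx$, because both endpoint norms are individually bounded by a constant times $\bigl(\int_\Omega fu\,dx\bigr)^{1/2}$, so every geometric mean of them is as well. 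No choice of $\vartheta$ and $t$ ``replaces the $1/2$.'' The right conclusion is that the square root is the intended exponent: the paper's own proof of the lemma stops precisely at $\bigl(\int_\Omega fu\,dx\bigr)^{1/2}$, and this is what feeds into \eqref{dis4pos}, where the exponent $\mathbf a=\frac{\xi(\theta-1)}{2\eta}+\frac{\xi(p-1)}{2\eta p}$ carries the half-powers coming from Lemma \ref{gradient_est} and from the Dirichlet-energy term. The displayed statement of the lemma is just missing the exponent $\tfrac12$.

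On the secondary points: you invoke elliptic regularity $\|u\|_{H^2}\le c\,\|\Delta u\|_{L^2}$ with a domain-dependent constant and then propose to track that constant's behaviour under dilation. The paper avoids this entirely: because $u\in H^2_0(\Omega)$, two integrations by parts give the exact identity $\int_\Omega|\Delta u|^2\,dx=\int_\Omega\|D^2u\|^2\,dx$, so the full Hessian is controlled with constant $1$ and no regularity theory is needed. The domain factor $d_\Omega^{\frac2t(3-N)}$ then comes directly from a scaled Sobolev inequality for $\nabla u$, with the lower-order term $\|\nabla u\|_{L^2}/d_\Omega^{\,1-2/t}$ absorbed via Poincar\'e (again using $\nabla u=0$ on $\partial\Omega$). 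Your route would also work, but it carries unnecessary machinery and an opaque constant. Finally, your observation that $N<4$ enters through the endpoint is roughly right: $H^2_0(\Omega)\hookrightarrow W^{1,t}$ with $t>N$ is exactly the requirement $2^*=\frac{2N}{N-2}>N$, i.e.\ $N<4$, with $N=3$ forcing the endpoint $t=6$ and $N=2$ allowing any finite $t$.
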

\begin{proof}
Since $u=\nabla u=0$ su $\partial \Omega$, one has 
\begin{equation*}
\int_{\Omega}|\Delta u(x)|^2\,\,dx\,= \,\sum_{i,j=1}^n \int_{\Omega}|D_{ij}u(x)|^2\,\,dx\,=\,\int_{\Omega} \|D^2u(x)\|^2\,\,dx\ .
\end{equation*}
By Sobolev's embedding and from \eqref{dis2pos}, when $N=3$ and  $t=6$ we have 
\begin{multline*}
\|\nabla u \|_{L^t(\Omega)} \leq \frac{c}{d_{\Omega}}\,\|\nabla u \|_{L^2(\Omega)} + c \|D^2 u \|_{L^2(\Omega)}\\
\leq c \|D^2 u \|_{L^2(\Omega)}
= c\|\Delta u \|_{L^2(\Omega)} \leq c \left(\int_{\Omega} f(x)\,u(x)\,\,dx\right)^{\frac{1}{2}}\ .
\end{multline*}
Similarly when $N=2$ and $t \geq 1$ we obtain 
\begin{multline*}
\|\nabla u \|_{L^t(\Omega)} \leq \frac{c}{d_{\Omega}^{1-\frac{2}{t}}}\|\nabla u \|_{L^2(\Omega)} + c d_{\Omega}^{\frac{2}{t}} \|D^2 u \|_{L^2(\Omega)}\\
\leq 
c d_{\Omega}^{\frac{2}{t}} \|D^2 u \|_{L^2(\Omega)}
= c d_{\Omega}^{\frac{2}{t}}\|\Delta u \|_{L^2(\Omega)} \leq c d_{\Omega}^{\frac{2}{t}}\left(\int_{\Omega} f(x)u(x)\,dx\right)^{\frac{1}{2}}\ .
\end{multline*}
\end{proof}

\begin{proof}[Proof of Theorem \ref{teo1pos}]
\noindent Let $x_{max}$ be an absolute maximum point for $u$ in $\overline{\Omega_1}$ and $x_{min}$ a local minimum for $u$ in $\Omega$. Set 
$$\displaystyle \mathbf{a}=\frac{\xi(\theta -1)}{2\eta}+\frac{\xi(p-1)}{2\eta p},\: \displaystyle \mathbf{b}=\left(\frac{\xi}{\eta}+\frac{N}{2}\right)(\theta -1),\: \displaystyle \mathbf{c}= \frac{\xi}{\eta} \frac{p-1}{p}\ .$$ 
From (31), Theorem 3.2 and Lemma 4.2 we have

\begin{equation}\label{dis4pos}
\qquad \qquad  u(x_{max}) \leq u(x_{min}) + c h \,r^{\mathbf{b}} \,\,d_{\Omega}^{[\frac{2}{t}(3-N)]\mathbf{c}} \,\,\frac{ \left( \int_{\Omega} f(x)u(x)\,\,dx \right)^{\mathbf{a}}}{\gamma^{\frac{\xi}{4 \eta}(\theta-1)}}
\end{equation}

\noindent where $\displaystyle \mathbf{a}=\frac{\xi(\theta -1)}{2\eta}+\frac{\xi(p-1)}{2\eta p} <1$. If $\displaystyle  \sup_{\Omega_1} u\geq 1$ then we have

\[
\qquad \qquad u(x_{max}) \leq u(x_{min}) + c h \,r^{\mathbf{b}} \,\,d_{\Omega}^{[\frac{2}{t}(3-N)]\mathbf{c}} \,\,\frac{ \left( \int_{\Omega} f(x)\,\,dx \right)^{\mathbf{a}} u(x_{max})}{\gamma^{\frac{\xi}{4 \eta}(\theta-1)}}
\ .
\]

\noindent The thesis follows as $\gamma$ is large enough. If $\displaystyle \sup_{\Omega_1}u < 1$, let $k >0$ be such that $\displaystyle k\sup_{\Omega_1} u \geq 1$. Set $w_k(x):=k\,u(x)$, which satisfies 
\begin{eqnarray}
\left\{
\begin{array}{l}
w_k\in W^{4,2}\cap H^{2}_0(\Omega_s)\\
\\
\Delta^2w_k(x)- \gamma \Delta w_k(x)=k\,f(x),\,\,\,\,x\in \Omega,
\end{array}
\right.
\label{kpos}
\end{eqnarray}
Peforming the change of variable $x=sy$, with $s>0$, $v_k(y)=w_k(sy)$, $g(y)=f(sy)$, $y_{min} = \frac{x_{min}}{s}$, $y_{max}=\frac{x_{max}}{s}$, we obtain 

\begin{eqnarray}
\left\{
\begin{array}{l}
v_k\in W^{4,2}\cap H^2_0(\Omega)\\
\\
\Delta^2v_k(y)- \gamma\,s^{-2}\, \Delta v_k(y)=\,s^{-4}\,k\,g(y),\,\,\,\,y\in \Omega_s,
\end{array}
\right.
\label{kspos}
\end{eqnarray}
\noindent where $\displaystyle \Omega_s = \{y: \,\,\,y=x/s, \,\,\,\,x \in \Omega\}$. Next apply \eqref{dis4pos} to the solution of \eqref{kspos} to get 
\begin{multline}\label{disvkspos}
v_k(y_{max}) \leq v_k(y_{min}) \\
+ c h \,\left(\frac{r}{s}\right)^{\mathbf{b}} \,\,\left(\frac{d_{\Omega}}{s}\right)^{[\frac{2}{t}(3-N)]\mathbf{c}} \,\,\frac{ \left( \int_{\Omega_s} g(x)\,\,dx \right)^{\mathbf{a}} v_k(y_{max})}{\gamma^{\frac{\xi}{4 \eta}(\theta-1)}}
\frac{k^{\mathbf{a}}}{s^{3\mathbf{a}}}\ .
\end{multline}
With respect to the original variables it reads as follows 
\begin{multline}\label{dis5pos}
u(x_{max}) \leq u(x_{min}) \\
+ c h \,\left(\frac{r}{s}\right)^{\mathbf{b}} \,\,\left(\frac{d_{\Omega}}{s}\right)^{[\frac{2}{t}(3-N)]\mathbf{c}} \,\,\frac{\left( \int_{\Omega} f(x)\,\,dx \right)^{\mathbf{a}} u(x_{max}) }{\gamma^{\frac{\xi}{4 \eta}(\theta-1)}}
\frac{k^{\mathbf{a}}}{s^{(N+3)\mathbf{a}}} \ . 
\end{multline}

\noindent Let us now observe that thanks to the interior sphere condition, the number $h$ of balls covering he path 
from $y_{max}$ to $y_{min}$ does not depend on the parameter $s$. 
The same happens for the parameter $k$.
Thus we choose the parameter $s$ such that  
\begin{equation}\label{hks}
\frac{h\,k^{\mathbf{a}}}{s^{\mathbf{b}+[\frac{2}{t}(3-N)]\mathbf{c}+(N+2)\mathbf{a}}} \,=\, 1,
\end{equation}
namely the thesis of the Theorem follows for all
\begin{equation}\label{gamma0}
 \gamma > c^{\frac{2\eta}{\xi (\theta -1)}}  d_{\Omega}^{\frac{2\eta  }{\xi (\theta -1)}\{[\frac{2}{t}(3-N)]\mathbf{c}+\mathbf{b}\}}
\left(\int_{\Omega}f(x)\,\,\,dx \right)^{\mathbf{a}\frac{2\eta  }{\xi (\theta -1)}}, 
\end{equation}
and thus $\gamma_0$ is the right hand side of \eqref{gamma0} with optimal constant $c$. When $\gamma=\gamma_0$ we just get the weak inequality $u\geq 0$. 
\end{proof}

\section{The validity of the strong maximum principle for higher order elliptic operators}\label{higher_order}
\noindent In this Section we first prove Theorem \ref{main} for which we have to remove the restriction to compactly supported data of Theorem \ref{teo1pos}. Then, we will extend the result obtained to polyharmonic operators and to more general uniformly elliptic operators of any even order with constant coefficients. 
\begin{proof}[Proof of Theorem \ref{main}]

\noindent Consider the following family of sets $\{\Omega_m\}_{m \in \N}$ such that for all $m \in \N$ satisfy:

\begin{itemize}
\item[i)] $\overline{\Omega}_m \subset \Omega_{m+1}\subset \overline{\Omega}_{m+1}\subset \Omega$;
\item[ii)] $\cup_{m=1}^{\infty} \Omega_m =\Omega$;
\item[iii)] $|\{x:\,\,f>0\}|>0\cap \Omega_1 \neq \Omega_1$;
\item[iv)] $ dist(\partial \Omega_m, \partial \Omega) \longrightarrow 0$ as  $m\to\infty$. 
\end{itemize}

\noindent Let $\chi_m$ be the characteristic function of $\Omega_m$
\begin{eqnarray*}
\chi_m(x)=
\left\{
\begin{array}{l}
1,\,\,\,x \in \Omega_m,\\
\\
0,\,\,\,x \notin \Omega_m.
\end{array}
\right.
\end{eqnarray*}
and set  
\begin{eqnarray}
g_m(x):=\frac{1}{S(x)}\,\,\frac{\chi_m(x)}{m^2}\,f(x), \,\,\,\,x \in \Omega,
\label{7pos}
\end{eqnarray}
where 
\[
S(x)=\sum_{m=1}^{+\infty}  \frac{\chi_m(x)}{m^2},
\]
converges pointwise on  $\Omega$. Moreover, notice that  $g_m \in L^2(\Omega)$.

\noindent Next consider the following problems 

\begin{eqnarray}
\left\{
\begin{array}{l}
u_m\in W^{4,2}\cap H^2_0(\Omega)\\
\\
\Delta^2u_m(x)- \gamma \Delta u_m(x)=g_m(x),\,\,\,\,x\in \Omega,
\end{array}
\right.
\label{mpos}
\end{eqnarray}
where by construction $g_m(x)=0$ for $x \in \Omega \setminus \Omega_m$ and thus by Theorem \ref{teo1pos} there exists $\gamma_m>0$ such that for all $\gamma > \gamma_m$, one has $u_m(x)>0$, for all $x \in \Omega$, $m \in \N$.

\noindent It is crucial here that by (\ref{hks}) and (\ref{gamma0}) the parameter $\gamma_m$ does not depend on $h$, namely does not depend on the distance of the maximum point of $u_m$ from the boundary (recall the proof of Theorem \ref{teoh}). Indeed, this prevents $\gamma_m$ to blow up and actually remain bounded since from \eqref{gamma0}
\begin{multline*}
\gamma_m =  c^{\frac{4\eta}{\xi (\theta -1)}} 
 d_{\Omega}^{\frac{4\eta}{\xi (\theta -1)}\{[\frac{2}{t}(3-N)]\mathbf{c}+\mathbf{b}\}}
\left(\int_{\Omega_m}g_m(x)\,\,\,dx \right)^{\mathbf{a}\frac{4\eta}{\xi (\theta -1)}}
\\
\leq  c^{\frac{4\eta}{\xi (\theta -1)}}  d_{\Omega}^{\frac{4\eta  }{\xi (\theta -1)}\{[\frac{2}{t}(3-N)]\mathbf{c}+\mathbf{b}\}}
\left(\int_{\Omega}f(x)\,\,\,dx \right)^{\mathbf{a}\frac{4\eta  }{\xi (\theta -1)}}
=\gamma_{\infty}
\end{multline*}
Therefore, for all $\gamma > \gamma_{\infty}$ and for all $m \in \N$ one has 
\begin{equation}\label{8pos}
u_m(x)>0,\quad x\in\Omega  
\end{equation}

\noindent Finally, we prove that the function 
\begin{equation}
\displaystyle
v(x)= \sum_{m=1}^{\infty} u_m(x)
\label{9pos}
\end{equation}
solves the following 
\begin{eqnarray}
\left\{
\begin{array}{l}
v\in W^{4,2}\cap H^2_0(\Omega)\\
\\
\Delta^2v(x)- \gamma \Delta v(x)=f(x),\,\,\,\,x\in \Omega
\end{array}
\right.
\label{10pos}
\end{eqnarray}
and thus by (\ref{8pos}) we conclude that for all $\gamma > \gamma_{\infty}$ and for all $x \in \Omega$ one has 
\[
v(x)>0 \ .
\]

\noindent By uniqueness of the solution to the Dirichlet problem (\ref{1pos})  the Theorem follows.
Hence, it remains to show that $v_m\to v\in W^{4,2}\cap H^2_0(\Omega)$ which is a solution to (\ref{10pos}).

\noindent Set 

\[
f_m = \sum_{i=1}^m g_i,\qquad v_m= \sum_{i=1}^m u_i \ .
\]
By Lebesgue's dominated convergence $f_m\to f$  in $L^2(\Omega)$ and notice that  $v_m$  solves the following  

 \begin{eqnarray}
\left\{
\begin{array}{l}
v_m\in W^{4,2}\cap H^2_0(\Omega)\\
\\
\Delta^2v_m(x)- \gamma \Delta v_m(x)=f_m(x),\,\,\,\,x\in \Omega \ .
\end{array}
\right.
\label{mpos}
\end{eqnarray}
Thus for all $m$, $l\in \N$ we have 
\begin{equation}\label{mpos2}
\begin{cases}
v_m-v_l\in W^{4,2}\cap H^2_0(\Omega)&\\
&\\
\Delta^2[v_m(x)-v_l(x)]- \gamma \Delta [v_m(x)-v_l(x)]=f_m(x)-f_l(x)
\end{cases}
\end{equation}
and multiplying by $v_m-v_l$ and integrating by parts we get 

\[
\int_{\Omega} |\Delta [v_m(x)-v_l(x)]|^2\,\,\,dx\,\leq \int_{\Omega}|f_m(x)-f_l(x)|^2\,\,\,dx
\]
which together with the equation \eqref{mpos} yields 

\[
\int_{\Omega} |\Delta ^2[v_m(x)-v_l(x)]|^2\,\,\,dx\,\leq \,c\,(14\gamma^2+2)\int_{\Omega}|f_m(x)-f_l(x)|^2\,\,\,dx \ .
\]

\noindent Thus $\{v_m\}$ is a Cauchy sequence in $W^{4,2} (\Omega)$ which converges to $v\in W^{4,2} (\Omega)$, the solution to \eqref{10pos}.
\end{proof}
\noindent What we have seen so far naturally extends to polyharmonic operators of any order and more in general to uniformly elliptic operators of any even order as established in the following 
\begin{cor}\label{higher_order_cor}
Let $u \in W^{2m,2}\cap W^{m,2}_0(\Omega)$, $m\geq 2$ be the solution to the following equation 
\begin{equation}\label{higher_eq}
(-1)^m {\mathcal A}_{2m}(D)u(x)-\gamma {\mathcal A}_2(x,D) u(x)= f(x), \,\,\,\,x \in \Omega,
\end{equation}
where $f \in L^{2}(\Omega)$,
\[
{\mathcal A}_{2m}(D)= \sum_{|\alpha|=|\beta|=m}a_{\alpha\beta}D^{\alpha+\beta}
\]
and
\[
{\mathcal A}_{2}(x,D)= \sum_{i,j=1}^n D_i[a_{ij}(x)D_j],
\] 

\noindent are uniformly elliptic operators on $\Omega$, namely there exist $\nu_{m}>0$ and $\nu_1>0$ such that for all $\xi \in \R^N$ and  $x \in \Omega$
\[
\nu_{m} \|\xi\|^{2m}\leq \sum_{|\alpha|=|\beta|=m}a_{\alpha\beta}\xi^{\alpha+\beta}, \quad  \nu_1 \|\xi\|^2  \leq  \sum_{i,j=1}^n a_{ij}(x)\xi_i\xi_j,
\]
with $a_{\alpha\beta}\in \R$ and $a_{ij}(x)\in L^{\infty}(\Omega)$. Then, there exists $\gamma_0>0$ such that for all $\gamma>\gamma_0$
one has $u(x)>0$ for all $x\in\Omega$. 
\end{cor}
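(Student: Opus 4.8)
The plan is to mimic the proof of Theorem \ref{teo1pos} and Theorem \ref{main}, replacing the concrete operators $\Delta^2$ and $-\Delta$ by the abstract uniformly elliptic operators $\mathcal{A}_{2m}(D)$ and $-\mathcal{A}_2(x,D)$, and checking that every estimate used along the way depends only on the ellipticity constants $\nu_m,\nu_1$, on $\|a_{ij}\|_{L^\infty}$, on $d_\Omega$, and on $\int_\Omega f$, but \emph{not} on the distance of the maximum point of the solution from $\partial\Omega$. First I would establish the analogue of the energy identity \eqref{dis2pos}: multiplying \eqref{higher_eq} by $u$ and integrating by parts, the uniform ellipticity of $\mathcal{A}_{2m}$ gives $\nu_m\sum_{|\alpha|=m}\|D^\alpha u\|_2^2$ from the principal part (using a Gårding-type argument together with the fact that all top-order derivatives of an $H^m_0$ function have comparable $L^2$ norms, exactly as $\|\Delta u\|_2^2=\sum_{ij}\|D_{ij}u\|_2^2$ was used in Lemma \ref{gradient_est}), and $\gamma\nu_1\|\nabla u\|_2^2$ from the second-order part; hence
\[
\nu_m\sum_{|\alpha|=m}\|D^\alpha u\|_2^2+\gamma\,\nu_1\,\|\nabla u\|_2^2\;\le\;\sup_{\Omega_1}u\int_{\Omega_1}f\,dx .
\]
This yields both $\sup_{\Omega_1}u>0$ and the crucial bound $\gamma\|\nabla u\|_2^2\le \tfrac1{\nu_1}\int f\,u$.

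Next I would reprove Lemma \ref{gradient_est} in this setting: by elliptic regularity for $\mathcal{A}_{2m}$ and Sobolev embedding $W^{m,2}\hookrightarrow W^{1,t}$ valid for $t>N$ precisely when $N<4$ (for $N=3$ take $t=6$, for $N=2$ any $t>2$), one gets $\|\nabla u\|_{L^t(\Omega)}\le c\,d_\Omega^{\frac2t(3-N)}\big(\int_\Omega f\,u\,dx\big)^{1/2}$ with $c$ depending only on $N$ and the ellipticity/coefficient bounds, uniformly in $\gamma$. With $u\in W^{1,t}(\Omega)$, $t>N$, in hand I can invoke the Harnack-type inequality of Theorem \ref{teoh} (via Theorem \ref{cor_c}) to compare $u(x_{max})$ and $u(x_{min})$, inserting the gradient estimates; this produces, exactly as in \eqref{dis4pos}, an inequality of the form $u(x_{max})\le u(x_{min})+ c\,h\,r^{\mathbf b}d_\Omega^{[\frac2t(3-N)]\mathbf c}\,\gamma^{-\frac{\xi}{4\eta}(\theta-1)}\big(\int_\Omega f\,u\big)^{\mathbf a}$ with $\mathbf a<1$. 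The scaling trick $x=sy$, $w_k=ku$, used to produce \eqref{dis5pos}–\eqref{hks}, goes through verbatim since the operators have constant (respectively $L^\infty$) coefficients and scale homogeneously in the principal part — one only needs $s^{-2m}\mathcal{A}_{2m}$ against $\gamma s^{-2}\mathcal{A}_2$, and choosing $s$ by \eqref{hks} absorbs the factor $h$, giving $u>0$ for $\gamma$ larger than an explicit $\gamma_0$ of the form \eqref{gamma0}.

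Finally, to remove the restriction to compactly supported $f$ I would run the exhaustion argument from the proof of Theorem \ref{main} unchanged: write $f=\sum_m g_m$ with $g_m$ supported in $\Omega_m\Subset\Omega$, solve \eqref{higher_eq} with datum $g_m$ to get $u_m>0$ (legitimate because $\gamma_m$ is uniformly bounded by $\gamma_\infty$, thanks again to the $h$-independence of $\gamma_0$), and show $v_m=\sum_{i\le m}u_i$ is Cauchy in $W^{2m,2}(\Omega)$ using the energy estimate for $\mathcal{A}_{2m}$ applied to $v_m-v_l$ (the $L^2$ bound on $\Delta^2(v_m-v_l)$ is replaced by an $L^2$ bound on $\mathcal{A}_{2m}(D)(v_m-v_l)$, and coercivity plus elliptic regularity upgrades this to a $W^{2m,2}$ bound with constant depending on $\gamma$ but not on $m,l$). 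The limit $v$ solves \eqref{higher_eq} and is positive; uniqueness of the Dirichlet problem identifies $v$ with $u$.

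The main obstacle is the first step: for a general $\mathcal{A}_{2m}(D)$ with only the stated pointwise ellipticity of the symbol, the energy identity after integration by parts is \emph{not} automatically coercive — one needs a Gårding inequality $\int_\Omega \mathcal{A}_{2m}(D)u\cdot u\,dx\ge \nu_m'\|u\|_{W^{m,2}_0}^2$, which for constant-coefficient operators does follow from the symbol ellipticity by Plancherel on $\R^N$ after extending $u\in W^{m,2}_0(\Omega)$ by zero, but requires care to state correctly and to keep the constant independent of $\gamma$ and of the geometry of $\Omega$. Once this coercivity is pinned down (and similarly the lower-order form $\int \mathcal{A}_2(x,D)u\cdot u = \int \sum a_{ij}D_iu\,D_ju\ge\nu_1\|\nabla u\|_2^2$ is immediate), every remaining step is a transcription of the polyharmonic case with $\Delta^2\mapsto\mathcal{A}_{2m}$, $-\Delta\mapsto-\mathcal{A}_2$, and $H^2_0\mapsto W^{m,2}_0$.
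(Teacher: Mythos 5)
Your proposal follows the same route as the paper's proof: multiply by $u$, integrate by parts, invoke G\aa rding's inequality to obtain a $\gamma$-independent bound $\nu_m\|u\|_{W^{m,2}_0}^2\leq c\|f\|_{L^2}^2$, feed this into Sobolev embedding to get $\nabla u\in L^t(\Omega)$ with $t>N$, and then point back to the Harnack/scaling/exhaustion machinery of Sections 3--4. You spell out the scaling and exhaustion steps more explicitly than the paper does (which simply says ``we conclude'' once the $W^{1,t}$ bound is in hand), and your remark that the constant-coefficient G\aa rding coercivity comes from extension-by-zero plus Plancherel is exactly the justification the paper leaves implicit. So the approach is essentially identical.

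One point deserves correction, however. You assert that $W^{m,2}\hookrightarrow W^{1,t}$ for some $t>N$ ``precisely when $N<4$'', and thus restrict the corollary to $N\leq 3$. That is the condition for $m=2$ only. Since the corollary is stated for arbitrary $m\geq 2$, the right condition is $N<2m$, i.e.\ $N\leq 2m-1$: for $N\leq 2(m-1)$ one has $\nabla u\in L^t$ for every $t$, and in the borderline case $N=2m-1$ one has $\nabla u\in L^{4m-2}$ with $4m-2>N$. This is exactly the case distinction the paper's proof makes, and it shows that the admissible dimension range grows with the order of the operator rather than staying stuck at $N\leq 3$. Your restriction to $N<4$ would prove a strictly weaker version of the corollary for $m>2$. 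The rest of the proposal (uniform gradient estimate, scaling of $\mathcal{A}_{2m}$ and $\mathcal{A}_2$ under $x=sy$ with only a reshuffling of exponents, exhaustion and $W^{2m,2}$-Cauchy argument) is sound.
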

\begin{proof}
We have to estimate intermediate derivatives of suitable order avoiding the dependance on $\gamma$. Multiplying the equation \eqref{higher_eq} by $u$ and integrating by parts we get  
\begin{multline*}
\int_{\Omega}  \sum_{|\alpha|=|\beta|=m}a_{\alpha\beta}D^{\alpha} u(x)\,D^{\beta}u(x)\,\,dx\,+\,\gamma\,
\int_{\Omega} \sum_{i,j=1}^n \,a_{ij}(x)D_ju(x)D_iu(x)\,\,\,dx\,\\ \leq \,\int_{\Omega} f(x)\,u(x)\,\,\,dx
\leq \|f\|_{L^2(\Omega)} \|u\|_{W^m_0(\Omega)} \ .
\end{multline*}
By the ellipticity condition and G{\aa}rding's inequality one has 
\[
\nu_m \|u\|^2_{W^m_0(\Omega)} + \gamma \nu_1  \|u\|^2_{W^{1,2}_0(\Omega)} \leq \int_{\Omega} |f(x)|\,|u(x)|\,\,\,dx \
\]
together with Poincar\'e's inequality 

\[
\nu_m \|u\|^2_{W^m_0(\Omega)}  \leq c(N,\Omega)\int_{\Omega} |f(x)|^2\,\,\,dx \ .
\]
   
\noindent We conclude by the Sobolev embedding theorem as follows:
\begin{itemize}
\item If $N\leq 2(m-1)$ one has $\nabla u \in L^t(\Omega)$, for all $t\geq 1$ and in particular for $t>N$ and  
\[
\|\nabla u\|_{L^t(\Omega)} \leq c  \|u\|_{W^{m,2}(\Omega)} \leq \|f\|_{L^2(\Omega)}\ ;
\]
\item If $N=2m-1$ one has $\nabla  u \in L^t(\Omega)$ with $t=4m-2$ and
\[
\|\nabla u\|_{L^t(\Omega)} \leq c  \|u\|_{W^{m,2}(\Omega)} \leq c  \|f\|_{L^2(\Omega)}\ .
\]
\end{itemize}
\end{proof}
\noindent It is well known from \cite{GazzolaGrunauSweers10} that the positivity preserving property of the ball for polyharmonic operators carries over to small deformations of the ball.  Actually on those domains what we have proved yields the positivity preserving property of the $\gamma$-perturbed polyharmonic operator for all $\gamma\geq 0$. For simplicity let us state the result in the case of the biharmonic operator:
\begin{cor}\label{corf1}
Let $\Omega\subset\R^N$ be an open bounded domain such that for all $f \in L^2(\Omega)$ with $f \geq 0$ and  $|\{x\in\Omega:\, f(x)=0\}|=0$, the solution $u \in W^{4,2}\cap W^{2,2}_0(\Omega)$ to 
\begin{eqnarray}
\Delta^2 u =f
\label{1} 
\end{eqnarray}
enjoys  $u(x) >0$, a.e. in $\Omega$. If there exists 
$\gamma_0 >0$ such that the solution  $v \in W^{4,2}\cap W^{2,2}_0(\Omega)$ to 
\begin{eqnarray}
\Delta^2 v - \gamma_0 \Delta v =f
\label{2}
\end{eqnarray}
enjoys   $v(x) >0$, a.e. in $\Omega$, then 
for all  $\gamma\in [0, \gamma_0]$ the solution $w \in W^{4,2}\cap W^{2,2}_0(\Omega)$ to $\Delta^2 w_{\gamma} - \gamma \Delta w_{\gamma} =f$ enjoys
   $w_{\gamma}(x) >0$, a.e. in $\Omega$.
\end{cor}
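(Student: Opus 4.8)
The plan is to interpolate linearly in the coefficient $\gamma$, exploiting that the operator $L_\sigma:=\Delta^2-\sigma\Delta$ depends affinely on $\sigma$. Fix the right-hand side $f$ and, for $\gamma=t\gamma_0$ with $t\in[0,1]$, denote by $w_0$, $v:=w_{\gamma_0}$ and $w_\gamma$ the (uniquely determined, since each $L_\sigma$ is coercive on $W^{2,2}_0(\Omega)$ for $\sigma\ge0$) solutions in $W^{4,2}\cap W^{2,2}_0(\Omega)$ to $L_0w_0=L_{\gamma_0}v=L_\gamma w_\gamma=f$. From $L_\gamma w_0=f-t\gamma_0\Delta w_0$ and $L_\gamma v=f+(1-t)\gamma_0\Delta v$ (the latter using $\Delta^2v=f+\gamma_0\Delta v$) one gets at once
$$L_\gamma\big[(1-t)w_0+tv\big]=f+t(1-t)\gamma_0\,\Delta(v-w_0),$$
and hence the representation
$$w_\gamma=(1-t)w_0+tv-t(1-t)\gamma_0\,L_\gamma^{-1}\big[\Delta(v-w_0)\big].$$

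The first two terms form a convex combination of the a.e.\ positive functions $w_0$ and $v$, hence are themselves a.e.\ positive (and bounded below by a positive constant on every compact subset of $\Omega$). So the Corollary follows as soon as one shows that the correction term is non-negative a.e., i.e.\ that $-L_\gamma^{-1}[\Delta(v-w_0)]\ge 0$ a.e.\ in $\Omega$ (note $t(1-t)\gamma_0\ge0$). The natural tool is that $v-w_0$ itself solves a clamped biharmonic problem: subtracting the two equations gives $\Delta^2(v-w_0)=\gamma_0\Delta v$ with $v-w_0\in W^{2,2}_0(\Omega)$, so $v-w_0=\gamma_0\,G_0[\Delta v]$, where $G_0:=(\Delta^2)^{-1}$ under clamped boundary conditions is exactly the positivity-preserving Green operator provided by the hypothesis on $\Omega$. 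Substituting, the correction term equals $-t(1-t)\gamma_0^2\,G_\gamma\,\Delta G_0\,\Delta v$ with $G_\gamma:=L_\gamma^{-1}$, and one is reduced to a sign statement for the composite operator $G_\gamma\Delta G_0\Delta$ evaluated at $v$, to be extracted from the positivity of $G_0$ and $G_{\gamma_0}$ at the two endpoints together with the clamped boundary behaviour of $w_0$ and $v$.

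Should the algebraic identity not directly deliver the sign, the alternative I would pursue is a connectedness argument on the parameter segment. The map $\gamma\mapsto w_\gamma=(I-\gamma G_0\Delta)^{-1}G_0f$ is real-analytic into $W^{4,2}(\Omega)$, hence continuous into $C(\overline\Omega)$ for the dimensions at hand; setting $\gamma^*=\sup\{\bar\gamma\in[0,\gamma_0]:w_\gamma>0\text{ in }\Omega\text{ for all }\gamma\in[0,\bar\gamma]\}$, one checks $\gamma^*>0$ by a short boundary-layer analysis near $\partial\Omega$ (where $w_0$ vanishes to second order, like $d_\Omega^{\,2}$, and the $\gamma$-perturbation is of lower order) combined with the strict interior positivity of $w_0$ on compacta, and then $\gamma^*=\gamma_0$ by excluding that $w_{\gamma^*}$ — which is $\ge0$ as a locally uniform limit of positive solutions — can vanish at an interior point, using the representation above with the pair $(w_0,w_{\gamma^*})$, or a Hopf/strong-maximum-type argument tailored to $\Delta^2-\gamma^*\Delta$. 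Finally, since on deformed-ball domains $\gamma_0$ may be taken arbitrarily large by Corollary~\ref{higher_order_cor}, letting $\gamma_0\to\infty$ upgrades the conclusion to all $\gamma\ge0$.

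The main obstacle, common to both routes, is that the clamped Dirichlet biharmonic operator does not factor through Dirichlet Laplacians, so neither $G_0$ nor $G_\gamma$ is a priori positivity preserving beyond what is assumed, and the auxiliary function $-\Delta w_\gamma$ is genuinely sign-changing — it is nonpositive on $\partial\Omega$, being the Laplacian of a nonnegative function that vanishes together with its normal derivative there. Hence the sign of the correction term (equivalently, the exclusion step at $\gamma^*$) cannot rest on a naive maximum principle for the second-order part and must use the positivity hypotheses at $\gamma=0$ and $\gamma=\gamma_0$ in an essential way: morally, positivity of $G_0$ and $G_{\gamma_0}$ must be propagated along the whole segment $[0,\gamma_0]$.
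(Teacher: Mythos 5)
Your calculation exposes a real problem: the paper's own proof rests precisely on the identity that your computation shows to be false. The paper sets $w_\tau=\tau v+(1-\tau)u$, $\gamma_\tau=\tau\gamma_0$, asserts ``hence $\Delta^2 w_\tau-\gamma_\tau\Delta w_\tau=f$'', and concludes by uniqueness of the Dirichlet problem. But, exactly as you compute,
\[
\Delta^2 w_\tau-\gamma_\tau\Delta w_\tau \;=\; f+\tau(1-\tau)\,\gamma_0\,\Delta(v-u),
\]
and the correction term does not vanish for $\tau\in(0,1)$ unless $v\equiv u$. So the paper's one-line argument is incorrect as written, and your insistence on tracking the extra term is the right instinct.

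That said, your proposal does not close the gap either. The representation $w_\gamma=(1-t)w_0+tv-t(1-t)\gamma_0\,L_\gamma^{-1}\bigl[\Delta(v-w_0)\bigr]$ is correct, but the sign of the last term is exactly the crux and, as you yourself concede, there is no reason it should be nonnegative: $\Delta(v-w_0)$ changes sign (its integral over $\Omega$ vanishes because $v-w_0\in W^{2,2}_0(\Omega)$), and the hypotheses give positivity of $L_\sigma^{-1}$ only at the endpoints $\sigma=0,\gamma_0$, while positivity of $L_\gamma^{-1}$ at the intermediate value is essentially what one is trying to prove. The fallback continuity-in-$\gamma$ argument is likewise only a sketch: the claim $\gamma^*>0$ rests on an unproved boundary-layer estimate, and the exclusion of an interior zero of $w_{\gamma^*}$ is precisely a strong maximum principle for $\Delta^2-\gamma^*\Delta$ on a general domain, which is not available a priori. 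So you have correctly diagnosed that the proof in the paper is flawed, but you have not supplied a correct replacement; the statement may still be true, but establishing it needs an idea beyond linear interpolation in $\gamma$.
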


\begin{proof}

Set $w_{\tau} = \tau v + (1-\tau) u $, $\gamma_{\tau}= \tau \gamma_0$, hence $\Delta^2 w_{\tau}- \gamma_{\tau} \Delta w_{\tau}=f$.
For $\tau = 0$, $w_{\tau}$ is a solution to (\ref{1}) whence for $\tau =1$, $w_{\tau}$ enjoys (\ref{2}) and then $w_{\tau}>0$  a.e. in $\Omega$ for all $\tau \in [0,1]$.
By uniqueness of the Dirichlet problem $w_{\tau}=w_{\gamma}$ and the claim follows. 

\end{proof}

\noindent From Corollary \ref{corf1} we also have 

\begin{cor}
Let $\Omega\subset\R^N$ be an open bounded domain such that for all $f \in L^2(\Omega)$ with  $f \geq 0$ and $|\{x\in\Omega:\, f(x)=0\}|=0$, the solution $u \in W^{4,2}\cap W^{2,2}_0(\Omega)$ to $ \Delta^2 u =f$
enjoys    $u(x) >0$, a.e. in $\Omega$. Then,  if $N=2,\,3$, we have for all $\gamma \in [0, \,+\infty)$ that $w_{\gamma}>0$.  
\end{cor}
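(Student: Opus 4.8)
The plan is to combine the two previously established facts: on the one hand Theorem \ref{teo1pos} (together with Theorem \ref{main}), which guarantees that for $N=2,3$ there exists a threshold $\gamma_0>0$ such that the $\gamma$-perturbed biharmonic problem has positive solutions for every $\gamma>\gamma_0$; on the other hand Corollary \ref{corf1}, which propagates positivity \emph{downward} on the interval $[0,\gamma_0]$ once one knows positivity at the single value $\gamma_0$ and at $\gamma=0$ (the latter being the standing hypothesis on $\Omega$). First I would invoke Theorem \ref{main}: since $f\in L^2(\Omega)$, $f\ge 0$ and $|\{x:f(x)>0\}|>0$ — which is implied by the present hypothesis $|\{x\in\Omega:f(x)=0\}|=0$ — there exists $\gamma_0>0$, depending on $f$ and on the Sobolev and Poincar\'e constants of $\Omega$, such that the solution $w_{\gamma}\in W^{4,2}\cap W^{2,2}_0(\Omega)$ to $\Delta^2 w_\gamma-\gamma\Delta w_\gamma=f$ satisfies $w_\gamma>0$ in $\Omega$ for every $\gamma>\gamma_0$.

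Next I would pick any fixed $\gamma_1>\gamma_0$ and apply Corollary \ref{corf1} with that $\gamma_1$ in the role of its ``$\gamma_0$'': the hypothesis of Corollary \ref{corf1} at $\gamma=0$ holds by assumption on $\Omega$, and the hypothesis at the perturbed level $\gamma_1$ holds by the previous paragraph. Hence the convex-combination / homotopy argument in the proof of Corollary \ref{corf1} (setting $w_\tau=\tau v+(1-\tau)u$, $\gamma_\tau=\tau\gamma_1$, and using uniqueness of the Dirichlet problem) gives $w_\gamma>0$ a.e.\ in $\Omega$ for all $\gamma\in[0,\gamma_1]$. Since $\gamma_1$ may be taken arbitrarily large, letting $\gamma_1\to+\infty$ covers the whole half-line and yields $w_\gamma>0$ a.e.\ in $\Omega$ for every $\gamma\in[0,+\infty)$.

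The only point that needs a word of care — and what I expect to be the mild ``obstacle'' — is the dependence of the threshold $\gamma_0$ on the data: $\gamma_0$ produced by Theorem \ref{main} depends on $f$ but \emph{not} on $\gamma$, so it is a genuine constant once $f$ and $\Omega$ are fixed, and this is exactly what makes the ``$\gamma_1$ arbitrary, then $\gamma_1\to\infty$'' step legitimate. One should also note that the restriction $N=2,3$ enters precisely through Theorem \ref{teo1pos}/\ref{main} (via the uniform-in-$\gamma$ gradient estimate of Lemma \ref{gradient_est}), which is why the statement is confined to those dimensions; for $\gamma\in[0,\gamma_0]$ no dimensional restriction beyond that of the standing hypothesis on $\Omega$ is needed, as it rests only on the homotopy in Corollary \ref{corf1}. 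With these observations the proof is complete.
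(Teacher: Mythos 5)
Your proposal is correct and follows exactly the route the paper intends: the paper gives no explicit proof here (it simply states that the corollary follows ``from Corollary \ref{corf1}''), and the only sensible reading is precisely your combination of Theorem \ref{main} (which supplies a threshold $\gamma_0$ for $N=2,3$ so that $w_\gamma>0$ for $\gamma>\gamma_0$) with Corollary \ref{corf1} (which fills in the interval $[0,\gamma_1]$ for any fixed $\gamma_1>\gamma_0$), letting $\gamma_1\to\infty$. You also correctly note the two small points of care — that $|\{f=0\}|=0$ implies the weaker hypothesis $|\{f>0\}|>0$ needed by Theorem \ref{main}, and that $\gamma_0$ is a genuine constant independent of the trial parameter $\gamma_1$ — so the argument is complete as written.
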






\begin{bibdiv}
\begin{biblist}

\bib{AJS}{article}{
Author = {Abatangelo, N.},
Author = {Jarohs, S.},
Author = {Salda\~na, A.},
Journal = {Proc. Amer. Math. Soc.},
Number = {11},
Pages = {4823--4835},
Title = {On the loss of maximum principles for higher-order fractional Laplacians.}, 
Volume = {146}, 
Year = {2018}
}

	
	
\bib{Alexandrov}{article}{
	Author= {Alexandrov, A.D.},
	Journal = {Ann. Mat. Pura Appl.},
	Pages = {303--354}, 
	Title = {A characteristic property of the spheres},
	Volume = {58},
	Year ={1962}
}

\bib{ACM}{book}{ 
Author = {Ambrosio, L.},
Author = {Carlotto, A.},
Author = {Massacesi, A.},
Publisher = {Edizioni della Normale, Pisa},
Title = {Lectures Notes on Elliptic Partial Differential Equations},
Year = {2018}
} 

\bib{berchio}{article}{
	Author = {Berchio, E.}, 
	Author = {Gazzola, F.},
	Author = {Weth, T.},
	Journal = {J. Reine Angew. Math.}, 
	Pages = {165--183},
	Title = {Radial symmetry of positive solutions to nonlinear polyharmonic Dirichlet problems},
	Volume = {620},
	Year = {2008}
}

\bib{boggio}{article}{
	Author = {Boggio, T.},
	Journal = {Rend. Circ. Mat. Palermo},
	pages = {97--135},
	Title ={Sulle funzioni di Green d’ordine $m$},
	Volume = {20},
	Year = {1905}
}

\bib{CassaniSchiera19}{article}{
	Author = {Cassani, D.},
	Author = {Schiera, D.},
	Journal = {Nonlinear Anal.},
	Pages = {17 pp.},
	Title={Uniqueness results for higher order elliptic equations and systems},
	Volume = {198},
	Year={2020}
	
}
	

\bib{DeGiorgi}{article}{
	Author={De Giorgi, E.},
	Journal = {Mem. Accad. Sci. Torino Cl. Sci. Fis. Mat. Natur.},
	Number = {3},
	Pages = {25--43},
	Title = {Sulla differenziabilit\`a e l’analiticit\`a degli integrali multipli regolari},
	Volume={3},
	Year={1957}}
	
\bib{DiBenedettoTrudinger}{article}{
	Author={Di Benedetto, E.},
	Author={Trudinger, N.S.},
	Journal={Annales de l'I.H.P. Analyse non lin\'eaire},
	Number={4},
	Pages={295--308},
	Title={Harnack inequalities for quasi-minima of variational integrals},
	Volume={1},
	Year={1984}}
	
\bib{GazzolaGrunauSweers10}{book}{
	Author = {Gazzola, F.},
	Author ={Grunau, H.C.},
	Author ={Sweers, G.},
	Publisher = {Springer-Verlag, Berlin},
	Title = {Polyharmonic boundary value problems. Positivity preserving and nonlinear higher order elliptic equations in bounded domains},
	Year = {2010}}
	
\bib{GiaquintaMartinazzi}{book}{
	Author = {Giaquinta, M.},
	Author = {Martinazzi, L.},
	Publisher = {Edizioni della Normale, Pisa},
	Title={An introduction to the regularity theory for elliptic systems, harmonic maps and minimal graphs},
	Year={2005}
}
\bib{GidasNiNirenberg79}{article}{
	Author = {Gidas, B.},
	Author ={Ni, W.M.},
	Author ={Nirenberg, L.},
	Journal = {Comm. Math. Phys.},
	Pages = {209--243},
	Title = {Symmetry and related properties via the maximum principle},
	Volume = {68},
	Year = {1979}}

\bib{Grunau_Robert}{article}{
	Author = {Grunau, H.-C.},
	Author = {Robert, F.},
	Journal = {Arch. Rational Mech. Anal.},
	Pages = {865--898},
	Title = {Positivity and almost positivity of biharmonic Green's functions under Dirichlet boundary conditions},
	Volume = {195},
	Year = {2010}
}
\bib{Hopf}{article}{
	Author = {Hopf, E.},
	 Journal = {Sitzungsber. d. Preuss. Akad. d. Wiss.},
	 Pages = {147--152},
	 Title = {Elementare Bemerkungen \"uber die L\"osungen partieller Differentialgleichungen zweiter Ordnung vom elliptischen Typus}, 
	 Volume = {19}, 
	 Year = {1927}
}		

\bib{Kempe}{book}{
	Author = {Kempe, V.},
	Publisher = {Cambridge University Press},
	Title = {Inertial MEMS},
	Year = {2011}
}


\bib{Martinazzi}{article}{
	Author = {Martinazzi, L.},
	Journal = {J. Funct. Anal.},
	 Pages = {3743–3771},
	 Title = {Concentration-compactness phenomena in the higher order Liouville's equation},
	 Volume = {256},
	 Year = {2009}
}
\bib{MM}{article}{
	Author = {Mayboroda, S.},
	Author = {Maz'ya, V.},
	Journal = {Invent. math.},
	 Pages = {779--853},
	 Title = {Polyharmonic capacity and Wiener test of higher order},
	 Volume = {211},
	 Year = {2018}
}



\bib{Nirenberg}{article}{
	Author = {Nirenberg, L.},
	Journal = {Comm. Pure Appl. Math.},
	Pages = {167--177},
	Title = {A strong maximum principle for parabolic equations}, 
	Volume = {6},
	Year = {1953} 
}

\bib{PV}{article}{
	Author = {Pipher, J.},
	Author = {Verchota, G.C.},
	Journal = {Ann. of Math.},
	Pages = {1--38},
	Title = {Dilation invariant estimates
and the boundary G\aa rding inequality for higher order elliptic operators}, 
	Volume = {142},
	Year = {1995} 
}

\bib{Protter_Weinberger}{book}{
	Author = {Protter, M.H.},
	Author = {Weinberger, H.},
	Publisher = {Prentice-Hall/Springer},
	Title = {Maximum principle in differential equations},
	Year = {1967}	
}

\bib{Pucci_Serrin}{article}{
	Author = {Pucci, P.},
	Author = {Serrin, J.},
	Journal = {J. Math. Pures Appl.},
	Pages = {55--83},
	Title = {Critical exponents and critical dimensions for polyharmonic operators}, 
	Volume = {69},
	Year = {1990}
}
	
\bib{Serrin}{article}{
	Author = {Serrin, J.},
	Journal = {Arch. Rational Mech. Anal.},
	Pages = {304--318} ,
	Title = {A symmetry problem in potential theory},
	Volume = {43},
	Year = {1971}		
}

\bib{Sassone}{article}{
	Author = {Sassone, E.},
	Journal = {Ann. Mat. Pura Appl.},
	Pages = {419--432},
	Title = {Positivity for polyharmonic problems on domains close to a disk}, 
	Volume = {186}, 
	Year = {2007}
}

\bib{Sweers}{article}{
	Author = {Sweers, G.},
	Journal = {Math. Nachr.},
	Pages = {202--206}, 
	Title = {No Gidas-Ni-Nirenberg type result for semilinear biharmonic problems},
	Volume = {246/247},
	Year = {2002}
	}
	

\end{biblist}
\end{bibdiv}

\end{document}